\def\kn{\kern.1em}
\newtheorem{theorem}{Theorem}[section]  
\newtheorem{corollary}[theorem]{Corollary}
\newtheorem{lemma}{Lemma}[section]
\newtheorem{proposition}{Proposition}[section]
\theoremstyle{definition}
\newtheorem{definition}[theorem]{Definition}
\newtheorem{remark}{Remark}[section]
\newtheorem{example}{Example}[section]
\begin{document}

\setcounter{page}{1}     %%  Initial page number. Do not change. 

%%%%%%  FORMAT OF AUTHOR AND TITLE INFORMATION:
 
%%%%%%%%%%%%%%%%%%% The (default) case of one author
%%%%%%%%%%%%%%%%%%% and one or two lines of title

%\AuthorTitle{John Smith}{The Title
%\newline The Title Second Line}

%%%%%%%%%%%%%%%%%%% The case of two authors
%%%%%%%%%%%%%%%%%%% and one line title
%\twoAuthorsTitleoneline {J.\ts Smith}{W.\ts A. Novak}{The One 
%Line Title}

%%%%%%%%%%%%%%%%%%% The case of two authors
%%%%%%%%%%%%%%%%%%% and at least two lines title
%\twoAuthorsTitle{M.\ts Smith}{W.\ts A. Novak}{The First 
%Line of The Title\linebreak 
%and the Second Line}

%%%%%%%%%%%%%%%%%%% The case of 
%%%%%%%%%%%%%%%%%%% three authors & one or two lines title 
%\twoAuthorsTitle{G.\ts Pelaitay}{W. 
%\ts Zuluaga}{Tense operators on distributive lattices with implication}

%%%%%%%%%%%%%%%%%%% The case of 
%%%%%%%%%%%%%%%%%%% three authors & at least three lines title 
\threeAuthorsTitlethreelines{F.\ts Almi\~{n}ana}{G.\ts Pelaitay}
{W. \ts Zuluaga}{On Heyting algebras with \linebreak negative tense operators}

%%%%%%  INFORMATION FOR FOOTER OF FIRST PAGE
%%%%%%  will be inserted by the Editorial Office.
   
%\PresentedReceived{Name of Editor}{December 1, 2005}

%%%%%%  ABSTRACT AND KEYWORDS  (obligatory)

\begin{abstract} In this paper, we will study Heyting algebras endowed with tense negative operators, which we call tense H-algebras and  we proof that these algebras are the algebraic semantics of the Intuitionistic Propositional Logic with Galois Negations. Finally, we will develop a Priestley-style duality for H-algebras. 
\end{abstract}

\Keywords{Heyting algebras, tense operators, negative tense operators, Galois negations.}

\section[Introduction]{Introduction}
The language of propositional tense logics consists of the countable set {\rm Var}$:= \{x_i : i \in \omega\}$ of propositional variables, the set of logical connectives, comprising the standard connectives $\wedge$, $\vee$, $\to$, $\neg$, of which conjunction and negation can be viewed as primitive and the others as defined, the constants $\perp  := x_0 \wedge \neg x_0$ and $\top := \neg \perp$, two unary connectives $G$, $H$, whose intuitive reading is   \emph{it is always going to be the case that} and \emph{it has always been the case that}, respectively. The set of formulas {\rm For} is defined in the standard recursive way.
A tense logic is any set of formulas closed under substitution and under some consequence operation given by:

\begin{itemize}
\item[] A set of axioms, containing:
\item[(A0)] All classical tautologies,
\item [(A1)] $G(\alpha \to  \beta) \to (G\alpha \to  G\beta)$; $H(\alpha \to  \beta) \to (H\alpha \to  H\beta)$,
\item[(A2)] $\alpha \to GP\alpha$; $\alpha \to HF\alpha$, where $P\alpha:=\neg H\neg \alpha$ and $F\alpha:=\neg G\neg \alpha$.
\end{itemize}

\begin{itemize}
\item[] A set of rules, containing:

    \item [(R0)] from $\alpha$, $\alpha \to \beta$ infer $\beta$ \hfill (Detachment),
    \item [(R1)] from $\alpha$ infer $G\alpha$ \hfill (G\"{o}del rule for $G$), 
    \item [(R2)] from $\alpha$ infer $H\alpha$ \hfill (G\"{o}del rule for $H$).
\end{itemize}

The minimal tense logic {\bf K}$_t$ is the logic whose axioms are just A0--A2, and inference rules just R0--R2. It is well know that the Kripke completeness of {\bf K}$_{t}$ is provided by \emph{tense Kripke frames} $\langle X,R,R^{-1}\rangle,$ where $X$ is a set, $R$ is a binary relation on $X,$ and $R^{-1}$ is the converse of $R$ (see \cite{Burgess,Diaconescu,Kowalski}). If $x,y\in X$ and $\alpha$ is a formula, the conditions for the holding of $G\alpha$, $H\alpha$, $F\alpha$ and $P\alpha$ at a point $x$ are
\begin{equation*}
x\models G\alpha \Longleftrightarrow \forall y (xRy \Longrightarrow y\models \alpha),    
\end{equation*}
\begin{equation*}
x\models H\alpha \Longleftrightarrow \forall y (xR^{-1}y \Longrightarrow y\models \alpha),    
\end{equation*}
\begin{equation*}
x\models F\alpha \Longleftrightarrow \exists y (xRy \,\,\, \textnormal{and}\,\,\, y\models \alpha),
\end{equation*}
\begin{equation*}
x\models P\alpha \Longleftrightarrow \exists y (xR^{-1}y \,\,\, \textnormal{and}\,\,\, y\models \alpha),
\end{equation*}

We can defined the tense operators $g$, $h$, $f$ and $p$ for which we give the following conditions, (where the right--hand sides are obtained by negating the right--hand sides of the conditions above)

\begin{equation*}
x\models g\alpha \Longleftrightarrow \exists y (xRy\,\, \textnormal{and}\,\, y\not\models \alpha),    
\end{equation*}
\begin{equation*}
x\models h\alpha \Longleftrightarrow \exists y (xR^{-1}y \,\, \textnormal{and}\,\, y\not\models \alpha),    
\end{equation*}
\begin{equation*}
x\models f\alpha \Longrightarrow \forall y (xRy \Longrightarrow y\not\models \alpha),
\end{equation*}
\begin{equation*}
x\models p\alpha \Longleftrightarrow \forall y (xR^{-1}y \Longrightarrow  y\not\models \alpha),
\end{equation*}

In the system {\bf K}$_t$, the tense operators $g$, $h$, $f$ and $p$ do not introduce anything particularly new, since $g\alpha$, $h\alpha$, $f\alpha$ and $p\alpha$ are definible as $\neg G\alpha$, $\neg H\alpha$, $\neg F\alpha$ and $\neg P\alpha$ (or in equivalent way $P\neg\alpha$, $F\neg\alpha$, $H\neg\alpha$ and $G\neg\alpha$), respectively. However, with intuitionistic propositional logic the connectives $g$, $h$, $f$ and $p$ need not be definable in this way any more (see \cite{Chajda,Chajdabook,Dzik,FP12,FP14}).

The main aim of this paper is to investigate the \emph{algebraic axiomatization} of the tense operators $g$, $h$, $f$ and $p$ on Heyting algebras.  Following the terminology established in \cite{ML2022} (see also \cite{Celani,KD,Dunn,Orlowska}) for \emph{tense operators} (\emph{modal operators}, respectively) on intuitionistic logic, the tense operators $g$, $h$, $f$ and $p$ will be called \emph{negative tense operators}. 

\section{Negative tense operators on Heyting algebras}\label{s2}

In this section we will define the notion of negative tense operators on Heyting algebras. Recall that a Heyting algebra is an algebra $\mathcal{A}=\langle A, \wedge,\vee,\to,0,1\rangle$ of type $(2, 2, 2, 0, 0)$ for
which $\langle A,\wedge,\vee, 0, 1\rangle$ is a bounded distributive lattice and $\to$ is the binary operation of relative pseudocomplementation (i.e., for $a, b, c \in A$, $a \wedge c \leq b$ iff $c \leq a \to b)$. To obtain more information on this topic, we direct the reader to the bibliography indicated in \cite{Balbes,CHK}.

\begin{definition}\label{d1}
 Let $\mathcal{A}=\langle A, \wedge,\vee,\to,0,1\rangle$ be a Heyting algebra, let $g$, $h$, $f$ and $p$ be unary operations on $A$ satisfying:

\begin{itemize}
\item [(T1)] $g(1)=0$ and $h(1)=0;$
\item [(T2)] $f(0)=1$ and $p(0)=1;$
\item [(T3)] $g(x\wedge y)=g(x)\vee g(y)$ and $h(x\wedge y)=h(x)\vee h(y);$
\item [(T4)] $f(x\vee y)=f(x)\wedge f(y)$ and $p(x\vee y)=p(x)\wedge p(y);$
\item [(T5)] $gh(x)\leq x$ and $hg(x)\leq x;$
\item [(T6)] $x\leq pf(x)$ and $x\leq fp(x);$
\item [(T7)] $g(x)\wedge f(y)\leq g(x\vee y)$ and $h(x)\wedge p(y)\leq h(x\vee y);$
\item [(T8)] $f(x\wedge y)\leq f(x)\vee g(y)$ and $p(x\wedge y)\leq p(x)\vee h(y)$.
\end{itemize}
Then the algebra $(\mathcal{A}, N)$, with $N=\{ g, h, f, p\}$ will be called negative tense Heyting algebra (or tense H-algebra, for short) and $g$, $h$, $f$ and $p$ will be called negative tense operators.

\end{definition}

We will list some basic properties valid in tense H-algebras, proving just some of them.

\begin{lemma}\label{l1} Let $(\mathcal{A},N)$ be a tense H-algebra. Then
\begin{itemize}
    \item [{\rm (T9)}] $x\leq y$ implies $g(y)\leq g(x)$ and $x\leq y$ implies $h(y)\leq h(x),$
    \item [{\rm (T10)}] $x\leq y$ implies $f(y)\leq f(x)$ and $x\leq y$ implies $p(y)\leq p(x),$
    \item [{\rm (T11)}] $g(x)\leq y$ if and only if $h(y)\leq x,$
    \item [{\rm (T12)}] $x\leq p(y)$ if and only if $y\leq f(x),$
    \item [{\rm (T13)}] $h((g(x)\to y)\to y)\leq x$ and $g((h(x)\to y)\to y)\leq x$,
    \item [{\rm (T14)}] $y\leq f(x\wedge p(y))$ and $y\leq p(x\wedge f(y))$.
\end{itemize}
\end{lemma}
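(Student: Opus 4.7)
The plan is to handle the six statements in the order listed, because each builds on the previous ones, and essentially everything reduces to two Galois connections and one standard Heyting identity.

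I would begin with (T9) and (T10), which are immediate from (T3) and (T4) respectively. If $x\leq y$ then $x = x\wedge y$, so by (T3) we have $g(x) = g(x\wedge y) = g(x)\vee g(y)$, forcing $g(y)\leq g(x)$; the $h$ case is identical, and the $f,p$ cases are dual (using $y = x\vee y$ and (T4)).

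Next I would prove (T11) and (T12), which are the key statements: they say that $(g,h)$ and $(f,p)$ are pairs of Galois negations (in the case of (T12), after reading it as the adjunction $f\dashv p$ between opposite orders). The recipe in both cases is the same standard argument: use the antitonicity from (T9)/(T10) to push the operator across an inequality, then close with the unit/counit from (T5)/(T6). For (T11), from $g(x)\leq y$ apply $h$ (antitone) to get $h(y)\leq hg(x)\leq x$ by (T5); conversely from $h(y)\leq x$ apply $g$ to get $g(x)\leq gh(y)\leq y$. The derivation of (T12) is the symmetric version, pushing the opposite operator and closing with the appropriate inequality of (T6), e.g. from $x\leq p(y)$ apply $f$ to obtain $f(p(y))\leq f(x)$ and use $y\leq fp(y)$.

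For (T13), the observation is that $a\leq (a\to b)\to b$ holds in every Heyting algebra. Setting $a = g(x)$ and $b = y$ gives $g(x)\leq (g(x)\to y)\to y$, and (T11) then converts this directly into $h((g(x)\to y)\to y)\leq x$. The second half follows by swapping the roles of $g$ and $h$.

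Finally (T14) is the easiest: the trivial inequality $x\wedge p(y)\leq p(y)$, read through the Galois connection (T12), becomes $y\leq f(x\wedge p(y))$, and symmetrically $x\wedge f(y)\leq f(y)$ gives $y\leq p(x\wedge f(y))$.

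I do not anticipate a genuine obstacle; the one conceptual step is recognizing (T11) and (T12) as Galois connections, after which (T13) and (T14) become essentially one-line applications. The axioms (T7) and (T8) are not needed for this lemma.
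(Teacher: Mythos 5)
Your proof is correct and follows essentially the same route as the paper: (T9)/(T10) from (T3)/(T4), (T11)/(T12) by antitonicity plus (T5)/(T6), (T13) from the Heyting identity $a\leq (a\to b)\to b$ combined with (T11), and (T14) as a one-line application of (T12). No discrepancies worth noting.
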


\begin{proof} Axioms (T9) and (T10) are a consequence of axioms
(T3) and (T4), respectively. Next, let us prove (T11). Assume that $g(x)\leq y$. From (T9), we obtain $h(y)\leq hg(x)$.  From this last statement and from (T5) it results that $h(y)\leq x$. The converse is proved in a similar way. Moreover, the proof of the validity of the axiom (T12) can be obtained in a similar way to (T11). Finally, let us verify (T13). Since $g(x)\wedge y\leq y$, we have that $g(x)\wedge (g(x)\to y)\leq y$, i.e., $g(x)\leq (g(x)\to y)\to y$. Therefore, from (T11) we obtain $h((g(x)\to y)\to y)\leq x$.  The other inequality is analogous. Finally, the proof of the validity of the axiom (T14) follows from (T12). 

\end{proof}

\begin{remark} From Lemma \ref{l1}, we have that the variety $\mathcal{HGN}$ of tense H-algebras coincide with the variety of GN-algebras (see \cite{ML2022}).
\end{remark}

\begin{example} There are a extreme example of negative tense operators  on a Heyting algebra $\mathcal{A}$. Define $g=h$ and $f=p$, such that $g(1)=0$ and $g(x)=1$ for all $x\not=1$ and $f(0)=1$ and $f(x)=0$ for all $x\not=0$.
\end{example}

\begin{example} Let $X$ be a set, $R$ be a binary relation on $X$ and $R^{-1}$ the converse of $R$. We define four operators on $\mathcal{P}(X)$ as follows:

\begin{equation}\label{g}
g_{R}(U)=\{x\in X: R(x)\cap (X\setminus U)\not=\emptyset\},    
\end{equation}
\begin{equation}\label{h}
h_{R}(U)=\{x\in X: R^{-1}(x)\cap (X\setminus U)\not=\emptyset\},    
\end{equation}
\begin{equation}\label{f}
f_{R}(U)=\{x\in X: R(x)\subseteq (X\setminus U)\},    
\end{equation}
\begin{equation}\label{p}
p_{R}(U)=\{x\in X: R^{-1}(x)\subseteq (X\setminus U)\}.   
\end{equation}

Then, we can check that the algebra $(\mathcal{P}(X),g_{R},h_{R},f_{R},p_{R})$ is a Heyting algebra with negative tense operators.

\end{example}

We will indicate an example of tense H-algebra which is not a Boolean algebra with negative tense operators.

\begin{example}\label{ej2}

Let us consider a Heyting algebra $\mathcal{A}$ visualized in Figure \ref{Fig1}.

\vspace{2cm}

\begin{figure}[h]
\begin{center}
\hspace{0.25cm}
\begin{picture}(-40,40)(0,0)
\put(00,00){\makebox(1,1){$\bullet$}}
\put(-30,30){\makebox(1,1){$\bullet$}}
\put(30,30){\makebox(1,1){$\bullet$}}
\put(00,60){\makebox(1,1){$\bullet$}}
\put(60,60){\makebox(1,1){$\bullet$}}
\put(30,90){\makebox(1,1){$\bullet$}}
\put(00,00){\line(1,1){30}}
\put(00,00){\line(-1,1){30}}
\put(-30,30){\line(1,1){30}}
\put(30,30){\line(1,1){30}}
\put(30,30){\line(-1,1){30}}
\put(00,60){\line(1,1){30}}
\put(60,60){\line(-1,1){30}}
\put(00,-10){\makebox(2,2){$0$}}
\put(-40,30){\makebox(2,2){$a$}}
\put(40,30){\makebox(2,2){$b$}}
\put(70,60){\makebox(2,2){$d$}}
\put(-10,60){\makebox(2,2){$c$}}
\put(30,100){\makebox(2,2){$1$}}
\end{picture}
\caption{Heyting algebra}
\label{Fig1}
\end{center}
\end{figure}
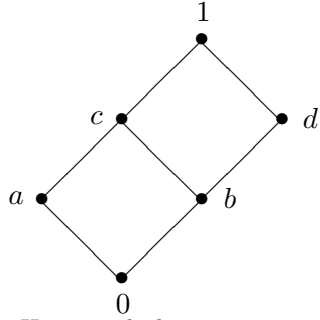

Define operators $g$, $h$, $f$ and $p$ by the table: 

\begin{center}
\begin{tabular}{|c|c|c|c|c|}\hline
   $x$   &  $g(x)$ & $h(x)$ & $f(x)$ & $p(x)$ \\ \hline
   $0$   &  $d$    & $1$ & $1$ & $1$    \\ \hline
   $a$   &  $d$    & $1$ & $a$ &  $1$  \\ \hline
   $b$   &  $d$    & $c$ & $c$ & $b$    \\ \hline
   $c$   &  $b$    & $c$ & $a$ &  $b$  \\ \hline
   $d$   &  $d$   & $0$ & $a$ &  $0$  \\ \hline
   $1$   &  $0$    & $0$ & $a$ &  $0$  \\ \hline
\end{tabular}
\end{center}

Then, it is easy to see that $(\mathcal{A}, g,h,f,p)$ is a tense H-algebra.

\end{example}

\begin{remark}  Let us consider the tense H-algebra $(\mathcal{A},g,h,f,
p)$, defined in the Example \ref{ej2}, then it can be seen: $\neg g(\neg b)=a\not=c=f(b)$ and $\neg h(\neg b)=0\not= b=p(b)$ , where $\neg x:=x\to 0$.
\end{remark}

The following lemma, will be essential for the proof of Lemma \ref{2.10}.

\begin{lemma}\textnormal{(\cite[Lemma 2.11.]{ML2022})}\label{lemma2} Let $(\mathcal{A},N)$ be a tense H-algebra and $S\subseteq A$. Then

\begin{itemize}
    \item [{\rm (a)}] If $S$ is a filter, then $f^{-1}(S)$ and $p^{-1}(S)$ are ideals.
    \item [{\rm (b)}] If $S$ is a prime filter, then $X\setminus g^{-1}(S)$ and $X\setminus h^{-1}(S)$ are filters. 
\end{itemize}
\end{lemma}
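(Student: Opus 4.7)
Both statements amount to routine verifications of the three defining properties of an ideal (resp.\ filter), namely containing $0$ (resp.\ $1$), being downward (resp.\ upward) closed, and being closed under binary joins (resp.\ meets). The only tools I will need are the constant and distributivity axioms (T1)--(T4), the monotonicity statements (T9)--(T10) from Lemma \ref{l1}, and, for part (b), the definition of a prime filter.

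For part (a), let $S$ be a filter and consider $f^{-1}(S)$. Then $0 \in f^{-1}(S)$ because $f(0)=1 \in S$ by (T2); if $b \leq a$ and $a \in f^{-1}(S)$, then (T10) gives $f(a) \leq f(b)$, so $f(b) \in S$ by upward closure of $S$; and if $a,b \in f^{-1}(S)$, then $f(a) \wedge f(b) \in S$, which by (T4) is exactly $f(a \vee b)$, so $a \vee b \in f^{-1}(S)$. The case of $p^{-1}(S)$ is word-for-word identical, using the right-hand halves of (T2), (T10) and (T4).

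For part (b), let $S$ be a prime filter, so in particular $0 \notin S$. I would check that $A \setminus g^{-1}(S) = \{a \in A : g(a) \notin S\}$ is a filter: it contains $1$ because $g(1)=0 \notin S$ by (T1); it is upward closed because if $a \leq b$ then (T9) gives $g(b) \leq g(a)$, and $g(b) \in S$ together with upward closure of $S$ would force $g(a) \in S$, a contradiction; and if $g(a) \notin S$ and $g(b) \notin S$, then (T3) gives $g(a \wedge b) = g(a) \vee g(b)$, so $g(a \wedge b) \in S$ would contradict primeness of $S$. The case of $h$ proceeds identically.

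No step is a real obstacle; the only conceptual remark is that primeness is used exactly once, in the meet-closure step of part (b), where (T3) converts a meet inside $g$ into a join in $A$. In part (a) the dual identity (T4) converts a join inside $f$ into a meet in $A$, and since filters are automatically closed under meets, an ordinary filter hypothesis suffices. This explains the asymmetry in the assumptions of the two halves of the statement.
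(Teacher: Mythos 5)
Your proof is correct and part (b) follows exactly the same line as the paper's proof (using (T1), (T9), (T3) and primeness in the same places); the paper omits part (a) as routine, and your verification of it via (T2), (T10) and (T4) is the intended dual argument. Your closing remark correctly identifies why primeness is needed only in (b).
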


\begin{proof}

We only prove {\rm(b)}. Let $S$ be a prime filter of $A$. Then, $0\notin S$. So, by (T1), $1\in X\setminus g^{-1}(S)$. Take $a \in X\setminus g^{-1}(S)$ and $a \leq b$. Then $g(a) \notin S$ and, by (T9), $g(b) \leq g(a)$. Hence $b \notin  g^{-1}(S)$, that is $b \in  X\setminus g^{-1}(S)$. Also, let
$a \in X\setminus g^{-1}(S)$ and $b \in X\setminus g^{-1}(S)$. Then $g(a) \notin  S$ and $g(b) \notin S$, so since $S$ is prime, $g(a)\vee g(b)\notin S$ . Thus by (T3), $g(a \wedge b)\notin S$, that is $a \wedge b\in X\setminus g^{-1}(S).$ Similarlty, we can prove that $X\setminus h^{-1}(S)$ is a filter.

\end{proof}

\section{Congruences and negative tense filters}\label{s3}

Let us recall that a non-empty subset $F$ of a Heyting algebra $\mathcal{A}=\langle A,\vee,\wedge,\to,0,1\rangle$ is an implicative filter (filter for short) if: $F$ is order increasing (increasing for short) and it is closed under $\wedge$. On the other hand, let us recall that a subset $D$ of $\mathcal{A}$ is a deductive system if it satisfies:

\begin{itemize}
    \item [(D1)] $1\in D,$
    \item [(D2)] $x$, $x\to y\in D$ implies $y\in D$.
\end{itemize}
Let $F$ be a filter of $\mathcal{A}$ and consider the set 
\[\theta_{F} = \{(x, y) \in A \times A : x \leftrightarrow y \in F\},\] 
where as usual, $x \leftrightarrow y$ must be taken as $(x\rightarrow y)\wedge (y\rightarrow x)$. In \cite{Monteiro}, Monteiro proved that in every Heyting algebra, the notions of
filter and deductive system both coincide (see also \cite{CHK}). Moreover, if we write ${\rm Con}(A)$ for the lattice of all congruences on $\mathcal{A}$, Monteiro (\emph{op.cit}) showed that the assignments  $F\mapsto \theta_{F}$ and $\theta \mapsto F_{\theta}=[1]_{\theta}$ determine a poset isomorphism between ${\rm Con}(A)$ and the set of filters of $\mathcal{A}$, where $[x]_\theta$ stands for the equivalence class of $x$ modulo $\theta$.
\\

In order to characterize the lattice {\rm Con}$_{t}$(A) of all congruences on a tense H-algebra $(\mathcal{A},N)$, we introduce the following definitions:

\begin{definition}
Let $(\mathcal{A},N)$ be a tense H-algebra. A congruence on $(\mathcal{A},N)$ is a Heyting congruence $\theta$ which is compatible with every $u\in N$. I.e. if $(x,y)\in \theta$, then $(u(x),u(y))\in \theta$, for every $u\in N$.
\end{definition}

\begin{definition}\label{def tnfilter}
Let $(\mathcal{A},N)$ be a tense H-algebra. A filter $F$ of $\mathcal{A}$ is said to be a negative tense filter provided that:
\[x\rightarrow y \in F\; \text{implies}\; u(y)\rightarrow u(x)\in F, \]
for every $u\in N$.
\end{definition}

\begin{lemma}\label{lem equivalent tnfilter}
Let $(\mathcal{A},N)$ be a tense H-algebra and let $F\subseteq A$. Then, the following are equivalent:
\begin{enumerate}
\item $F$ is a negative tense filter.
\item $x\leftrightarrow y \in F$ implies $u(x)\leftrightarrow u(y)\in F$, for every $u\in N$.
\item The following hold:
\begin{enumerate}
    \item  $g(x)\to y\in F$ implies $h(y)\to x\in F,$
    \item  $h(x)\to y\in F$ implies $g(y)\to x\in F,$
    \item  $x\to p(y)\in F$ implies $y\to f(x)\in F,$
    \item  $x\to f(y)\in F$ implies $y\to p(x)\in F.$
\end{enumerate}
\end{enumerate}
\end{lemma}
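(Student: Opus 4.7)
The plan is to establish the two biconditionals $(1)\Leftrightarrow(2)$ and $(1)\Leftrightarrow(3)$ separately. Throughout I will use two elementary facts about a filter $F$ in a Heyting algebra: first, $a\leftrightarrow b\in F$ if and only if both $a\to b\in F$ and $b\to a\in F$, since $F$ is closed under meets and upward closed; second, $\to$ is ``transitive modulo $F$'', meaning that if $a\to b\in F$ and $b\to c\in F$ then $a\to c\in F$, because $(a\to b)\wedge(b\to c)\leq a\to c$ holds in every Heyting algebra.

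For $(1)\Rightarrow(2)$, assume $x\leftrightarrow y\in F$. Then $x\to y,\, y\to x\in F$, so applying (1) twice we get $u(y)\to u(x)\in F$ and $u(x)\to u(y)\in F$, whence $u(x)\leftrightarrow u(y)\in F$. For $(2)\Rightarrow(1)$, assume $x\to y\in F$. Note that $x\to(x\wedge y)$ equals $x\to y$ and $(x\wedge y)\to x=1$, so $x\leftrightarrow(x\wedge y)\in F$. Taking $u=g$ in (2) and applying (T3) yields $g(x)\leftrightarrow g(x)\vee g(y)\in F$, from which $g(y)\to g(x)\in F$ follows; the $u=h$ case is identical. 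For $u=f$, rewrite $x\to y\in F$ as $y\leftrightarrow x\vee y\in F$ and use (T4) to obtain $f(y)\to f(x)\in F$; the $u=p$ case is analogous.

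For $(1)\Rightarrow(3)$, condition (a) is representative: if $g(x)\to y\in F$, applying (1) with $u=h$ yields $h(y)\to hg(x)\in F$, and since $hg(x)\leq x$ by (T5), transitivity modulo $F$ delivers $h(y)\to x\in F$. Items (b), (c), (d) follow identically, using (T5) for (b) and (T6) for (c), (d).

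The main substance is $(3)\Rightarrow(1)$, where the idea is to rewrite the hypothesis $x\to y\in F$ using the universal inequalities (T5), (T6) so that one of (a)--(d) directly applies. For $u=g$: from (T5) we have $hg(x)\to x=1\in F$, so transitivity gives $hg(x)\to y\in F$; then (3)(b), applied with $a=g(x)$ and $b=y$, produces $g(y)\to g(x)\in F$. For $u=h$: symmetric, using (T5) and (3)(a). For $u=f$: from (T6), $y\to pf(y)=1\in F$, so $x\to pf(y)\in F$, and (3)(c) with $a=x$, $b=f(y)$ yields $f(y)\to f(x)\in F$. For $u=p$: symmetric, using (T6) and (3)(d). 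No serious obstacle arises; the only point requiring care is pairing the correct Galois-type clause of (3) with the correct universal inequality from Definition \ref{d1}.
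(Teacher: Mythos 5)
Your proof is correct and follows essentially the same route as the paper's: the same reduction of $x\to y\in F$ to a bi-implication of lattice terms for $(2)\Rightarrow(1)$, and the same pairing of (T5)/(T6) with the Galois clauses of (3) in both remaining directions. The only cosmetic difference is that in $(2)\Rightarrow(1)$ you invoke (T3)/(T4) case by case, whereas the paper handles all four operators uniformly via $(x\vee y)\leftrightarrow y$ and antitonicity; both work.
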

\begin{proof}
$(1)\Rightarrow (2)$. Let us assume $x\leftrightarrow y \in F$. Since $F$ is increasing, $x\rightarrow y, y\rightarrow x \in F$. So $u(y)\rightarrow u(x), u(x)\rightarrow u(y)\in F$, for every $u\in N$. Therefore, since $F$ is closed by $\wedge$, $u(x)\leftrightarrow u(y)\in F$, as required.

\noindent $(2)\Rightarrow (1)$. Let us assume $x\rightarrow y \in F$. Then $(x\vee y)\leftrightarrow y\in F$. From  (2), $u(x\vee y)\leftrightarrow u(y)\in F$, for every $u\in N$. Observe that since $u(x\vee y)\leq u(y)$, for every $u\in N$, the latter implies that $u(y)\rightarrow u(x\vee y)\in F$. Therefore, due to $u(x\vee y)\leq u(x)$, we get $u(y)\rightarrow u(x\vee y)\leq u(y)\rightarrow u(x)$. So, because $F$ is increasing, $u(y)\rightarrow u(x)\in F$ for every $u\in F$, as claimed.

\noindent $(1)\Rightarrow (3)$. We only prove (a) and (c), because the proofs for (b) and (d) are analogue. For (a), let us assume $g(x)\rightarrow y\in F$. Then, by (1), $h(y)\rightarrow h(g(x))\in F$. Since $h(g(x))\leq x$,  it is the case that $h(y)\rightarrow h(g(x))\leq h(y)\rightarrow x$. So, due to $F$ is increasing, $h(y)\rightarrow x\in F$. For (c), if $x\to p(y)\in F$, by (1), $f(p(y))\rightarrow f(x)\in F$. Since $y\leq f(p(y))$, then we get $f(p(y))\rightarrow f(x)\leq y\rightarrow f(x)$. Hence, because $F$ is increasing, we conclude $ y\rightarrow f(x)\in F$.

\noindent $(3)\Rightarrow (1)$. Suppose $x\rightarrow y \in F$. We only prove $g(y)\rightarrow g(x)\in F$ and $f(y)\rightarrow f(x)\in F$, due to the proofs for the remaining cases are similar. On the one hand, since $h(g(x))\leq x$ we obtain that $x\to y\leq h(g(x))\to y$. Hence, from the hypothesis, we get $h(g(x))\to y\in F$.  From this last assertion and (b), we infer that $g(y)\to g(x)\in F$.  On the other hand, from $y\leq p(f(y))$ we deduce that $x\to y\leq x\to p(f(y))$. Then, $x\to p(f(y))\in F$. So, from (c), we can conclude $f(y)\to f(x)\in F$, as desired.
\end{proof}

Let $(\mathcal{A},N)$ be a tense H-algebra. In what follows we will denote by $\mathcal{F}_{t}(A)$ the set of all negative tense filters of $(\mathcal{A},N)$.

\begin{lemma}\label{l3} 
Let $(\mathcal{A},N)$ be a tense H-algebra. For any $F\in \mathcal{F}_{t}(A)$, the Heyting algebra congruence $\theta_{F}$ is a congruence on $(\mathcal{A},N)$. 
\end{lemma}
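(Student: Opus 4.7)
The plan is to reduce the statement directly to the characterization of negative tense filters established in Lemma~\ref{lem equivalent tnfilter}. Since $\theta_F$ is already known, by Monteiro's correspondence, to be a Heyting algebra congruence, the only thing left to verify is compatibility with each of the four unary operators in $N=\{g,h,f,p\}$; that is, whenever $(x,y)\in\theta_F$ we must show $(u(x),u(y))\in\theta_F$ for every $u\in N$.

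Unfolding the definition of $\theta_F$, the hypothesis $(x,y)\in\theta_F$ means exactly that $x\leftrightarrow y\in F$, and the desired conclusion means exactly that $u(x)\leftrightarrow u(y)\in F$. But this is precisely the implication $(1)\Rightarrow(2)$ of Lemma~\ref{lem equivalent tnfilter}, applied to the negative tense filter $F$. Hence the compatibility holds simultaneously for $g$, $h$, $f$ and $p$.

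Combining the two observations, $\theta_F$ is a Heyting congruence satisfying the extra requirement $(u(x),u(y))\in\theta_F$ for every $u\in N$, which is exactly the definition of a congruence on the tense H-algebra $(\mathcal{A},N)$. Therefore $\theta_F\in\operatorname{Con}_t(A)$, as claimed.

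There is essentially no obstacle here: all the real work has already been done in Lemma~\ref{lem equivalent tnfilter}, where the equivalence between the one-sided condition defining $\mathcal{F}_t(A)$ and the two-sided biconditional condition $x\leftrightarrow y\in F\Rightarrow u(x)\leftrightarrow u(y)\in F$ was established using (T5), (T6) and the monotonicity of implication. The present lemma is then just a translation of that equivalence into the language of congruences via Monteiro's isomorphism $F\mapsto \theta_F$.
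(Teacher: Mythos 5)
Your proposal is correct and follows essentially the same route as the paper: both note that $\theta_F$ is already a Heyting congruence by Monteiro's correspondence and then obtain compatibility with each $u\in N$ by applying the implication $(1)\Rightarrow(2)$ of Lemma~\ref{lem equivalent tnfilter} to the definition of $\theta_F$. No gaps.
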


\begin{proof} Let $F \in \mathcal{F}_{t} (A)$, since $\mathcal{A}$ is a Heyting algebra
and $F$ is a filter of $\mathcal{A}$, then we know that $\theta_{F}\in {\rm Con}(A)$. Let us prove that $\theta_{F}$ preserves the negative tense operators.  Let $(x, y) \in \theta_{F}$. Then, from Lemma \ref{lem equivalent tnfilter} (2), we have $g(x)\leftrightarrow g(y) \in F$, i.e., $(g(x),g(y))\in \theta_{F}$. In a similar fashion, it can be proved that $\theta_{F}$ preserves $h$, $f$ and $p$. 
\end{proof}

\begin{lemma}\label{l4} 
Let $(\mathcal{A},N)$ be a tense H-algebra. Then for any $\theta\in {\rm Con}_{t}(A),$  $[1]_{\theta}$ is a negative tense filter of $(\mathcal{A},N)$. 
\end{lemma}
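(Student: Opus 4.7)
The plan is to proceed in two stages. First, since $\theta$ is in particular a Heyting congruence on $\mathcal{A}$, Monteiro's classical result (cited earlier in the paper) immediately tells us that $[1]_\theta$ is a filter of $\mathcal{A}$. So the substantive work is verifying the defining implication of Definition \ref{def tnfilter}: namely, that $x\to y\in [1]_\theta$ forces $u(y)\to u(x)\in [1]_\theta$ for every $u\in N=\{g,h,f,p\}$.

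To do this, I would translate the hypothesis ``$x\to y\in [1]_\theta$'' (that is, $(x\to y,1)\in\theta$) into a pair of comparability relations modulo $\theta$. Using the Heyting identity $x\wedge (x\to y)=x\wedge y$ together with compatibility of $\theta$ with $\wedge$, I get $(x,x\wedge y)\in\theta$; then joining both sides with $y$ yields $(x\vee y,y)\in\theta$ as well. These two relations express, in essence, that $x$ and $y$ become comparable (with $x\leq y$) in the quotient.

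Now I would apply the key hypothesis that $\theta$ is compatible with every $u\in N$, in combination with axioms (T3) and (T4). For $u=g$: from $(x,x\wedge y)\in\theta$ and compatibility, $(g(x),g(x\wedge y))\in\theta$, which by (T3) becomes $(g(x),g(x)\vee g(y))\in\theta$; this says exactly that $g(y)\leq g(x)$ holds in $\mathcal{A}/\theta$, i.e.\ $g(y)\to g(x)\in [1]_\theta$. The same argument works for $h$ via (T3). For $u=f$: from $(x\vee y,y)\in\theta$ and compatibility, $(f(x\vee y),f(y))\in\theta$, which by (T4) becomes $(f(x)\wedge f(y),f(y))\in\theta$, giving $f(y)\to f(x)\in [1]_\theta$. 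The case $u=p$ is handled identically via (T4).

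There is no real obstacle here beyond keeping the direction of the four antitone operators straight; the proof is essentially a bookkeeping exercise that leverages the splitting of the axioms $N$ into the pair $\{g,h\}$ controlled by (T3) and the pair $\{f,p\}$ controlled by (T4). The only subtlety is remembering to use \emph{both} derived relations $(x,x\wedge y)\in\theta$ and $(x\vee y,y)\in\theta$, since the first is convenient for $g,h$ while the second is convenient for $f,p$.
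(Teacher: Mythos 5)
Your proof is correct, but it takes a genuinely different route from the paper's. The paper does not verify Definition \ref{def tnfilter} directly; instead it invokes the equivalent characterization in Lemma \ref{lem equivalent tnfilter}\,(3) and checks conditions (a)--(d), which mix the operators in Galois pairs (e.g.\ from $g(x)\to y\in[1]_\theta$ it derives $h(y)\to x\in[1]_\theta$). To do this it leans on the inequalities (T13) and (T14): for instance, from $(g(x)\to y,1)\in\theta$ and compatibility with $\to$ and $h$ it gets $(h((g(x)\to y)\to y)\to x,\, h(y)\to x)\in\theta$, and then (T13) collapses the left-hand side to $1$. You instead verify the defining implication of Definition \ref{def tnfilter} head-on: you convert $x\to y\in[1]_\theta$ into the two congruences $(x, x\wedge y)\in\theta$ and $(x\vee y, y)\in\theta$ via the identity $x\wedge(x\to y)=x\wedge y$, and then push these through $g,h$ using (T3) and through $f,p$ using (T4). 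Your version is more elementary and self-contained -- it needs only the (anti)homomorphism axioms (T3)/(T4) and compatibility, and it bypasses Lemma \ref{lem equivalent tnfilter} and the inequalities (T13)/(T14) entirely -- whereas the paper's version has the mild advantage of re-using machinery (the characterization lemma and (T13)/(T14)) that it has already set up and that mirrors the proof of the companion Lemma \ref{l3}. Both arguments are complete and correct; in particular your handling of the filter part via Monteiro's theorem and your reading of $(g(x),g(x)\vee g(y))\in\theta$ as $g(y)\to g(x)\in[1]_\theta$ in the quotient are exactly right.
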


\begin{proof} Let $\theta \in {\rm Con}_{t} (A)$. Then $\theta \in Con(A)$ and consequently, we have that $[1]_\theta$ is a deductive system of $A$. In order to prove our claim, we will use Lemma \ref{lem equivalent tnfilter} (3). We start by assuming that $g(x)\to y\in [1]_\theta$. I.e. $(g(x)\to y,1)\in \theta$. Since $\theta$ is compatible with $\to$ and $h$ we can infer that $(h((g(x)\to y)\to y)\to x, h(y)\to x)\in \theta$.  From this last statement and (T13) we obtain $(1,h(y)\to x)\in \theta$, i.e, $h(y)\to x\in [1]_\theta$. On the other hand, if $x\to p(y)\in [1]_\theta$ thus $(x\to p(y),1)\in \theta$. Since $\theta$ is compatible with $\wedge$, we have that $(x\wedge (x\to p(y)),x)\in \theta$, i.e., $(x\wedge p(y),x)\in \theta$. Taking into account
that $\theta$ preserve $f$ and $\to$  we infer that $(y\to f(x\wedge p(y)),y\to f(x))\in \theta$. Then, from (T14), we have $(1,y\to f(x))\in \theta$. Therefore, $y\to f(x)\in [1]_\theta$. The proofs of the remaining cases are similar.
\end{proof}

The following result immediately follows from Lemmas \ref{l3} and \ref{l4}:

\begin{theorem}\label{t1} 
Let $(\mathcal{A},N)$ be a tense H-algebra. Then, the assignments $F \mapsto \theta_{F}$ and $\theta \mapsto F_{\theta}$ extend to an isomorphism between the lattices $\mathcal{F}_{t}(\mathcal{A})$ and ${\rm Con}_{t}(\mathcal{A})$.

\end{theorem}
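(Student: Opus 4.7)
The plan is to lift the Monteiro isomorphism between filters and congruences of the underlying Heyting algebra to the tense setting, using Lemmas~\ref{l3} and~\ref{l4} as the two ``compatibility'' ingredients. Concretely, I would proceed as follows.

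First I would observe the inclusions $\mathcal{F}_{t}(\mathcal{A}) \subseteq \mathcal{F}(\mathcal{A})$ and $\mathrm{Con}_{t}(\mathcal{A}) \subseteq \mathrm{Con}(\mathcal{A})$. On the larger sets, Monteiro's result recalled in the paragraph preceding Definition~\ref{def tnfilter} already provides a poset (in fact, lattice) isomorphism $F \mapsto \theta_{F}$ with inverse $\theta \mapsto [1]_{\theta}$, where both $\mathcal{F}(\mathcal{A})$ and $\mathrm{Con}(\mathcal{A})$ are ordered by inclusion.

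Next I would use the two lemmas to show that this isomorphism restricts properly. By Lemma~\ref{l3}, if $F \in \mathcal{F}_{t}(\mathcal{A})$ then $\theta_{F} \in \mathrm{Con}_{t}(\mathcal{A})$, so the assignment $F \mapsto \theta_{F}$ sends $\mathcal{F}_{t}(\mathcal{A})$ into $\mathrm{Con}_{t}(\mathcal{A})$. By Lemma~\ref{l4}, if $\theta \in \mathrm{Con}_{t}(\mathcal{A})$ then $[1]_{\theta} \in \mathcal{F}_{t}(\mathcal{A})$, so $\theta \mapsto [1]_{\theta}$ sends $\mathrm{Con}_{t}(\mathcal{A})$ into $\mathcal{F}_{t}(\mathcal{A})$. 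Since these two maps are already mutually inverse bijections on the larger sets, their restrictions to $\mathcal{F}_{t}(\mathcal{A})$ and $\mathrm{Con}_{t}(\mathcal{A})$ are automatically mutually inverse bijections between these subsets.

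Finally I would note that both assignments are order-preserving: $F_{1} \subseteq F_{2}$ clearly implies $\theta_{F_{1}} \subseteq \theta_{F_{2}}$, and $\theta_{1} \subseteq \theta_{2}$ implies $[1]_{\theta_{1}} \subseteq [1]_{\theta_{2}}$. Since $\mathcal{F}_{t}(\mathcal{A})$ and $\mathrm{Con}_{t}(\mathcal{A})$ inherit their order (and hence their lattice structure, taking meets as intersections and joins as the generated tense filter/congruence) from the ambient lattices, an order isomorphism between them is automatically a lattice isomorphism.

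There is essentially no hard step here: all the real work has been done in Lemmas~\ref{l3} and~\ref{l4}, and the remainder is a bookkeeping argument that a bijection restricts to a bijection between two well-chosen substructures. The only point one must be a little careful about is to confirm that meets and joins in $\mathcal{F}_{t}(\mathcal{A})$ and $\mathrm{Con}_{t}(\mathcal{A})$ agree with those in the ambient lattices, so that ``poset isomorphism'' really upgrades to ``lattice isomorphism''; for meets this is immediate (intersection of tense filters is a tense filter, intersection of tense congruences is a tense congruence), and for joins it follows from the general fact that the inverse of an order isomorphism between complete lattices preserves arbitrary suprema.
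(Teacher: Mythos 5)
Your proposal is correct and follows exactly the route the paper takes: the paper simply states that the theorem ``immediately follows'' from Lemmas~\ref{l3} and~\ref{l4}, i.e.\ from restricting Monteiro's filter--congruence isomorphism using those two compatibility lemmas. You have merely spelled out the bookkeeping (mutual inverses restrict to mutual inverses, order isomorphism upgrades to lattice isomorphism) that the paper leaves implicit.
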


\subsection{Negative tense filter generation}\label{Negative tense filter generation}

Let $\mathcal{A}$ be a Heyting algebra and let $u$ be a unary operator on $A$. We recall from \cite{H2001} that from every $a\in A$, we can consider:

\[[u](a)=\bigwedge \{u(b)\leftrightarrow u(c)\colon a\leq b\leftrightarrow c\}. \]

It is clear from the definition that $[u]$ does not have to exist in general. So it may be considered as a partial function on $A$. Nevertheless, if $u$ is antitone, then $[u]$ exists and it has a particular form:

\begin{lemma}\label{lem [u]}
Let $\mathcal{A}$ be a Heyting algebra and let $u$ be an antitone operator on $A$. Then, 
\[[u](a)=\bigwedge \{u(a\wedge b)\rightarrow u(b)\colon b\in A\}. \]
\end{lemma}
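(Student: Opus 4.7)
The plan is to prove the stated equality by establishing both inequalities between the two meets, relying as the pivotal observation on the identity
\[a\leq b\leftrightarrow c \;\iff\; a\wedge b = a\wedge c,\]
valid in every Heyting algebra. For the forward direction, $a\leq b\to c$ gives $a\wedge b\leq c$, and combined with $a\wedge b\leq a$ one gets $a\wedge b\leq a\wedge c$; the reverse follows symmetrically, and the converse is immediate from the adjunction defining $\to$.

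Write $X=\{u(b)\leftrightarrow u(c)\colon a\leq b\leftrightarrow c\}$ and $Y=\{u(a\wedge b)\rightarrow u(b)\colon b\in A\}$. For $\bigwedge X\leq \bigwedge Y$ I would bound each element of $Y$ from above by a member of $X$. Given $b\in A$, take the pair $(a\wedge b,b)$: a direct computation yields $(a\wedge b)\leftrightarrow b = b\to a$, and $a\leq b\to a$ holds trivially. Hence $u(a\wedge b)\leftrightarrow u(b)\in X$, and this element is itself below $u(a\wedge b)\to u(b)$, so the latter majorizes $\bigwedge X$.

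For the converse $\bigwedge Y\leq \bigwedge X$, take $(b,c)$ with $a\leq b\leftrightarrow c$, so by the identity above $a\wedge b=a\wedge c$. Since $a\wedge c=a\wedge b\leq b$ and $u$ is antitone, $u(b)\leq u(a\wedge c)$; because the Heyting implication is antitone in its first argument, this yields $u(a\wedge c)\to u(c)\leq u(b)\to u(c)$. The left-hand side belongs to $Y$, so $\bigwedge Y\leq u(b)\to u(c)$. Swapping the roles of $b$ and $c$ gives $\bigwedge Y\leq u(c)\to u(b)$, and therefore $\bigwedge Y\leq u(b)\leftrightarrow u(c)$, completing the proof.

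The main subtlety is keeping track of two layers of antitonicity at once, namely that of $u$ itself and that of $\to$ in its first argument; once the equivalence $a\leq b\leftrightarrow c\iff a\wedge b=a\wedge c$ is in hand, the rest amounts to termwise comparison, with no existence issues since one meet is proved to dominate, and be dominated by, the other element by element.
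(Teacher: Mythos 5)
Your proof is correct and follows essentially the same route as the paper: both directions are proved by termwise comparison, using the pair $(a\wedge b,b)$ (which satisfies $a\leq (a\wedge b)\leftrightarrow b$) for one inequality and the implication $a\leq b\leftrightarrow c\Rightarrow a\wedge b=a\wedge c$ together with antitonicity of $u$ for the other. The only cosmetic difference is that the paper derives the second inequality by chaining biconditionals through $u(a\wedge b)=u(a\wedge c)$, whereas you bound each implication $u(b)\to u(c)$ and $u(c)\to u(b)$ separately via antitonicity of $\to$ in its first argument.
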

\begin{proof}
On the one hand, since $a\leq b\rightarrow a$. Then, it is the case that $a\leq b\leftrightarrow (a\wedge b)$, for every $b\in A$. Hence,  since $u(b)\leq u(a\wedge b)$, we get:
\[[u](a)\leq u(b)\leftrightarrow u(a\wedge b)= u(a\wedge b) \rightarrow u(b), \]
for every $b\in A$. Therefore, $[u](a)\leq \bigwedge \{u(a\wedge b)\rightarrow u(b)\colon b\in A\}$. On the other hand, if $a\leq b\leftrightarrow c$, then it follows that $a\wedge b= a\wedge c$. Observe that the latter, together with a well known property of Heyting algebras, implies that:
\[(u(b)\leftrightarrow u(a\wedge b)) \wedge (u(a\wedge b)\leftrightarrow  u(a\wedge c)) \wedge (u(a\wedge c)\leftrightarrow u(c))\leq u(b)\leftrightarrow u(c),\]
which means that, for every $b,c\in A$:
\[(u(a\wedge b)\rightarrow u(b)) \wedge (u(a\wedge c)\rightarrow u(c))\leq u(b)\leftrightarrow u(c). \]
Hence, $\bigwedge \{u(a\wedge b)\rightarrow u(b)\colon b\in A\}\leq [u](a)$. This concludes the proof.

\end{proof}

Now we specialize Lemma \ref{lem [u]} for tense H-algebras.

\begin{lemma}\label{lem [u] tense H-algebras}
Let $(\mathcal{A},N)$ be a tense H-algebra. Then, $[g](a)=\neg g(a)$ and $[h](a)=\neg h(a)$.
\end{lemma}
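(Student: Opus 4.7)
The plan is to instantiate the general formula from Lemma \ref{lem [u]} for the specific operators $g$ and $h$, and then use the Heyting-algebra axioms (T1) and (T3) to simplify the resulting meet to a closed form.

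First, note that $g$ is antitone by (T9), so Lemma \ref{lem [u]} applies and yields
\[[g](a)=\bigwedge\{g(a\wedge b)\to g(b)\colon b\in A\}.\]
The key idea is that (T3) lets us rewrite $g(a\wedge b)=g(a)\vee g(b)$, which in turn simplifies each meetand drastically. Using the standard Heyting identity $(x\vee y)\to y=x\to y$ (which comes from $(x\vee y)\to y=(x\to y)\wedge(y\to y)=x\to y$), we get
\[g(a\wedge b)\to g(b)=(g(a)\vee g(b))\to g(b)=g(a)\to g(b).\]
So the formula above becomes $[g](a)=\bigwedge\{g(a)\to g(b)\colon b\in A\}$.

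Now I would extract the value $\neg g(a)$ from this meet in two moves. On the one hand, since $0\leq g(b)$ for every $b$, the implication is monotone in the consequent and we obtain $\neg g(a)=g(a)\to 0\leq g(a)\to g(b)$, so $\neg g(a)$ is a lower bound for the family, giving $\neg g(a)\leq [g](a)$. On the other hand, specializing to $b=1$ and invoking (T1) (which gives $g(1)=0$), the corresponding meetand is exactly $g(a)\to 0=\neg g(a)$, so $[g](a)\leq \neg g(a)$. Combining both inequalities yields $[g](a)=\neg g(a)$.

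The argument for $[h](a)=\neg h(a)$ is identical: $h$ is antitone by (T9), (T3) supplies $h(a\wedge b)=h(a)\vee h(b)$ with the resulting simplification to $h(a)\to h(b)$, and (T1) gives $h(1)=0$ so that taking $b=1$ attains the value $\neg h(a)$. There is no real obstacle here — the only subtle point is remembering the Heyting identity $(x\vee y)\to y=x\to y$ used to collapse the meetands, and that (T3) is precisely the axiom that lets this identity apply.
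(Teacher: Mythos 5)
Your proof is correct and follows essentially the same route as the paper: both instantiate Lemma \ref{lem [u]}, obtain $[g](a)\leq \neg g(a)$ by taking $b=1$ with (T1), and show $\neg g(a)$ is a lower bound of every meetand via (T3). The only cosmetic difference is that you collapse $g(a\wedge b)\to g(b)$ to $g(a)\to g(b)$ using the identity $(x\vee y)\to y=x\to y$ and then invoke monotonicity of $\to$ in its consequent, whereas the paper reaches the same inequality by a direct distributivity-plus-residuation computation.
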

\begin{proof}
We only proof the statement for $g$ due to the proof for $h$ is analogue. On the one hand, notice that since $g(a\wedge 1)\rightarrow g(1)=\neg g(a)$, it is clear that $[g](a)\leq \neg g(a)$. On the other hand, since for every $b\in A$, we have $g(b)\wedge \neg g(a)\leq g(b)$, then $(g(a)\wedge \neg g(a))\vee (g(b)\wedge \neg g(a))\leq g(b)$. Thus, $(g(a)\vee g(b))\wedge \neg g(a)\leq g(b)$. So, by (T3) and residuation, we get $\neg g(a)\leq g(a\wedge b)\rightarrow g(b)$. Therefore, $\neg g(a)\leq [g](a)$ and consequently, $[g](a)=\neg g(a)$, as desired. 
\end{proof}

Let $(\mathcal{A},N)$ be a tense H-algebra. Inspired in \cite{H2001}, we consider the following operator:
\[[N](a):=[f](a)\wedge [g](a) \wedge [h](a) \wedge [p](a). \]
It is no hard to see that $[N]$ is defined for every $a\in A$, as long as $[u]$ is defined, for every $u\in N$. The following result, shows that in some sense, the relevant information on $[N]$ underlies on the information provided by the operators $g$ and $h$.

\begin{lemma}\label{lem [N] tense H-algebras}
Let $(\mathcal{A},N)$ be a tense H-algebra. Then, $[N](a)=\neg g(a) \wedge \neg h(a)$.
\end{lemma}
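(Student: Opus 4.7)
The plan is to combine Lemma \ref{lem [u] tense H-algebras}, which already handles $[g]$ and $[h]$, with an explicit description of $[f]$ and $[p]$ coming from Lemma \ref{lem [u]}; the glue that forces the identity is axiom (T8). Since (T10) guarantees that $f$ and $p$ are antitone, Lemma \ref{lem [u]} applies and gives
\[ [f](a)=\bigwedge\{f(a\wedge b)\to f(b)\colon b\in A\}, \qquad [p](a)=\bigwedge\{p(a\wedge b)\to p(b)\colon b\in A\}. \]
Together with $[g](a)=\neg g(a)$ and $[h](a)=\neg h(a)$, the identity $[N](a)=\neg g(a)\wedge \neg h(a)$ is then equivalent to the single inequality $\neg g(a)\wedge \neg h(a)\leq [f](a)\wedge [p](a)$, since the reverse inequality is automatic from the definition of $[N]$ and from $[g](a)\wedge [h](a)=\neg g(a)\wedge \neg h(a)$.

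Next I would prove the two symmetric bounds $\neg g(a)\leq [f](a)$ and $\neg h(a)\leq [p](a)$. By residuation and the description of $[f](a)$ above, the first reduces to showing
\[ f(a\wedge b)\wedge \neg g(a)\leq f(b) \quad \text{for every } b\in A. \]
Applying (T8) with the variables swapped (i.e. using $f(b\wedge a)\leq f(b)\vee g(a)$) and then distributivity of the underlying lattice,
\[ f(a\wedge b)\wedge \neg g(a)\leq (f(b)\vee g(a))\wedge \neg g(a) = (f(b)\wedge \neg g(a))\vee (g(a)\wedge \neg g(a))\leq f(b), \]
because $g(a)\wedge \neg g(a)=0$ in a Heyting algebra. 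The same argument applied to the other half of (T8), namely $p(b\wedge a)\leq p(b)\vee h(a)$, yields $p(a\wedge b)\wedge \neg h(a)\leq p(b)$, hence $\neg h(a)\leq [p](a)$.

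Putting the pieces together, $\neg g(a)\wedge \neg h(a)\leq [f](a)\wedge [p](a)$, and combined with $[g](a)\wedge [h](a)=\neg g(a)\wedge \neg h(a)$ this forces $[N](a)=\neg g(a)\wedge \neg h(a)$.

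The only delicate point is the choice of the correct instantiation of (T8): one has to read it with $b$ in the role of $x$ and $a$ in the role of $y$, so that the surviving disjunct after meeting with $\neg g(a)$ is $f(b)$ rather than $f(a)$. Once that is set up, the proof is a short calculation using antitonicity (T10), residuation, and distributivity; no further appeal to (T5)--(T7) or to the ``mixed'' axioms is needed.
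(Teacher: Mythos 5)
Your proof is correct and follows essentially the same route as the paper's: both establish the easy inequality $[N](a)\leq \neg g(a)\wedge\neg h(a)$ from Lemma \ref{lem [u] tense H-algebras}, and then prove $\neg g(a)\leq [f](a)$ (and symmetrically $\neg h(a)\leq [p](a)$) by instantiating (T8) as $f(a\wedge b)\leq f(b)\vee g(a)$, meeting with $\neg g(a)$, using distributivity together with $g(a)\wedge\neg g(a)=0$, and concluding by residuation and the explicit description of $[f]$ from Lemma \ref{lem [u]}. Your remark about the correct instantiation of (T8) is exactly the point the paper's computation relies on.
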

\begin{proof}
It is clear from Lemma \ref{lem [u] tense H-algebras} and the definition of $[N]$, that $[N](a)\leq \neg g(a) \wedge \neg h(a)$. So in order to proof the remaining inequality, observe that from (T8), it is the case that $\neg g(a) \wedge f(a\wedge b)\leq \neg g(a) \wedge (f(b)\vee g(a))=\neg g(a)\wedge f(b)\leq f(b)$, for every $b\in A$. Then, by residuation $\neg g(a)\leq f(a\wedge b)\rightarrow f(b)$, for every $b\in A$. Hence, $\neg g(a)\leq [f](a)$. In a similar fashion it can be proved that $\neg h(a)\leq [p](a)$. Therefore, from the latter we can conclude $\neg g(a) \wedge \neg h(a) \leq [N](a)$. This concludes the proof.
\end{proof}

\begin{remark}\label{rem [u] are normal}
We stress that by Theorem 2.3 of \cite{H2001}, in every tense H-algebra $(\mathcal{A},N)$, for every $u\in N$, the operator $[u]$ is a normal operator. I.e. $[u](1)=1$ and $[u](a\wedge b)=[u](a)\wedge [u](b)$. It is no hard to see that the latter implies that the operator $[N]$ is also normal. 
\end{remark}

Let $(\mathcal{A},N)$ be a tense H-algebra and let $F$ be a filter of $\mathcal{A}$. We say that $F$ is a \emph{$[N]$-filter}, if it is closed by $[N]$. I.e. $[N](x)\in F$ whenever $x\in F$. For a subset $X \subseteq A$, we denote by $[X)_N$ the smallest negative tense filter containing $X$ and $[X)_{[N]}$ the smallest $[N]$-filter containing $X$. In particular, for every $a\in A$ we write $[a)_N$ and $[a)_{[N]}$ instead of $[\{a\})_N$ and $[\{a\})_{[N]}$, respectively.

\begin{lemma}\label{lem negative tense filters and [N]-filters}
In every tense H-algebra $(\mathcal{A},N)$, negative tense filters and $[N]$-filters coincide.
\end{lemma}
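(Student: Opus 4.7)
The plan is to establish the two inclusions separately, exploiting the equivalent formulations of negative tense filters from Lemma \ref{lem equivalent tnfilter} together with the identity $[N](a)=\neg g(a)\wedge\neg h(a)$ supplied by Lemma \ref{lem [N] tense H-algebras}.

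For the direction negative tense filter $\Rightarrow$ $[N]$-filter, I would take $F$ a negative tense filter and $x\in F$. Since $x\leftrightarrow 1=x\in F$, part (2) of Lemma \ref{lem equivalent tnfilter} applied with $u=g$ and $u=h$ yields $g(x)\leftrightarrow g(1)\in F$ and $h(x)\leftrightarrow h(1)\in F$; axiom (T1) collapses these to $\neg g(x)\in F$ and $\neg h(x)\in F$, and closure under $\wedge$ then gives $[N](x)=\neg g(x)\wedge \neg h(x)\in F$.

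For the converse, I would assume that $F$ is a filter closed under $[N]$ and verify the four clauses (a)--(d) of Lemma \ref{lem equivalent tnfilter}(3). The uniform trick is that whenever $a\in F$, the identity of Lemma \ref{lem [N] tense H-algebras} together with $[N](a)\in F$ forces $\neg g(a),\neg h(a)\in F$. To handle (a), take $a=g(x)\to y\in F$; from $g(x)\wedge a\leq y$, applying $h$ and using (T3) together with the axiom $hg(x)\leq x$ from (T5), one obtains
\[h(y)\leq h(g(x)\wedge a)=h(g(x))\vee h(a)\leq x\vee h(a).\]
Meeting both sides with $\neg h(a)$ gives $h(y)\wedge\neg h(a)\leq x$, so by residuation $\neg h(a)\leq h(y)\to x$; since $\neg h(a)\in F$ and $F$ is increasing, $h(y)\to x\in F$, which is clause (a).

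The remaining clauses follow the same template: (b) is symmetric to (a) with $g$ and $h$ swapped; for (c), starting from $a=x\to p(y)\in F$, the inequality $x\wedge a\leq p(y)$ becomes $y\leq f(x\wedge a)$ via (T12), is then bounded by $f(x)\vee g(a)$ using (T8), and meeting with $\neg g(a)\in F$ yields $y\to f(x)\in F$; clause (d) is analogous, exchanging the roles of $(f,g)$ and $(p,h)$. I do not foresee any real obstacle here; the only substantive point is recognising that the four axioms (T3), (T5), (T8), (T12) are shaped precisely so that a single closure condition under $[N]=\neg g\wedge\neg h$ delivers all four clauses of Lemma \ref{lem equivalent tnfilter}(3).
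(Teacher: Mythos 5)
Your proof is correct. For the direction ``negative tense filter $\Rightarrow$ $[N]$-filter'' you argue essentially as the paper does: from $a\in F$ one extracts $\neg g(a)$ and $\neg h(a)$ in $F$ (the paper uses $a=1\to a$ and the definition directly, you use $a\leftrightarrow 1=a$ and clause (2) of Lemma \ref{lem equivalent tnfilter}; the two are interchangeable) and then invokes Lemma \ref{lem [N] tense H-algebras}. Where you genuinely diverge is in the converse direction: the paper simply cites Lemma 3.1(1) of \cite{H2001} to conclude that every $[N]$-filter is a negative tense filter, whereas you give a direct, self-contained verification of the four clauses of Lemma \ref{lem equivalent tnfilter}(3). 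Your computation for clause (a) --- passing from $g(x)\wedge a\leq y$ to $h(y)\leq x\vee h(a)$ via (T3) and (T5), then meeting with $\neg h(a)$ and residuating --- is sound, as is the (T12)/(T8) variant for clause (c), and the symmetric cases go through as you indicate. What your route buys is independence from Hasimoto's general theory of $[u]$-operators, at the cost of a page of explicit Heyting-algebra manipulation; it also makes visible, as you note, exactly which axioms of Definition \ref{d1} are responsible for the equivalence, which the paper's citation hides.
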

\begin{proof}
We start by noticing that from Lemma 3.1 (1) of \cite{H2001} every $[N]$-filter is a negative tense filter. For the converse, let $F$ be a negative tense filter of $\mathcal{A}$ and let us assume $a\in F$. Since $a=1\rightarrow a$ then, from the hypothesis and the definition of negative tense filter, we get that $\neg g(a)=g(a)\rightarrow g(1)\in F$. Similarly we obtain that $\neg h(a)\in F$. Therefore, from the assumption on $F$ and Lemma \ref{lem [N] tense H-algebras}, we get $[N](a)\in F$, as desired.
\end{proof}

For $a \in A$ we define $[N]^n$ inductively on $n$ as follows: 
\begin{displaymath}
\begin{array}{cccc}
[N]^0(a)=a, & & & [N]^{n+1}(a)=[N]([N]^{n}(a)).
\end{array}
\end{displaymath}

Furthermore, for every $k\in \mathbb{N}$, we set $[N]^{(k)}(a):=[N]^{(0)}(a)\wedge \ldots \wedge [N]^{(k)}(a)$.

\begin{theorem}\label{negative tense generated filter}
Let $(\mathcal{A},N)$ be a tense H-algebra and let $X\subseteq A$. Then,
\[ [X)_N=\{ y\in A\colon\!\! [N]^{(k)}(x_1\wedge \ldots \wedge x_n)\leq y, \text{for some}\; x_1,\ldots, x_n\in X \text{and}\, k\in \mathbb{N}\}. \]
\end{theorem}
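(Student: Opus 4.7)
The plan is to invoke Lemma~\ref{lem negative tense filters and [N]-filters} to replace the problem of generating a negative tense filter by that of generating a $[N]$-filter, and then to exploit the normality of $[N]$ (Remark~\ref{rem [u] are normal}, combined with the elementary fact that a finite meet of normal operators is normal) to describe $[X)_N$ explicitly. Write $S$ for the set on the right-hand side. I will establish two things: that $S$ is a $[N]$-filter containing $X$, and that every $[N]$-filter containing $X$ contains $S$.

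For the first, $X\subseteq S$ is immediate from $[N]^{(0)}(x)=x\leq x$, and upward closure of $S$ is built into the definition. For closure under $\wedge$, suppose $y_1,y_2\in S$ are witnessed by families $x^{(i)}_1,\ldots,x^{(i)}_{n_i}\in X$ and exponents $k_i\in \mathbb{N}$, for $i=1,2$. Setting $k=\max\{k_1,k_2\}$ and letting $z$ be the meet of all the $x^{(i)}_j$, the monotonicity of every $[N]^m$ (a consequence of normality) yields
\[[N]^{(k)}(z)\leq [N]^{(k_i)}\bigl(x^{(i)}_1\wedge \cdots \wedge x^{(i)}_{n_i}\bigr)\leq y_i,\]
hence $[N]^{(k)}(z)\leq y_1\wedge y_2$, showing $y_1\wedge y_2\in S$.

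Closure of $S$ under $[N]$ is the delicate step. The key identity is
\[[N]^{(k+1)}(a)=a\wedge [N]\bigl([N]^{(k)}(a)\bigr),\]
which holds because the normality of $[N]$ lets it be distributed over the finite meet defining $[N]^{(k)}(a)$, thereby shifting the indices by one. In particular $[N]^{(k+1)}(a)\leq [N]\bigl([N]^{(k)}(a)\bigr)$, so whenever $[N]^{(k)}(x_1\wedge\cdots\wedge x_n)\leq y$, applying $[N]$ and using its monotonicity yields $[N]^{(k+1)}(x_1\wedge\cdots\wedge x_n)\leq [N](y)$, which witnesses $[N](y)\in S$.

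The reverse containment is straightforward: if $F$ is any $[N]$-filter with $X\subseteq F$, then $x_1\wedge\cdots\wedge x_n\in F$ for every finite family in $X$; an easy induction using closure of $F$ under $[N]$ gives $[N]^m(x_1\wedge\cdots\wedge x_n)\in F$ for every $m\in \mathbb{N}$; closure under finite meets then gives $[N]^{(k)}(x_1\wedge\cdots\wedge x_n)\in F$; and upward closure delivers $S\subseteq F$. The main obstacle in this plan is the displayed identity for $[N]^{(k+1)}$; once normality of $[N]$ is available, however, it reduces to routine meet bookkeeping, after which the theorem follows by combining minimality with the fact that $S$ is itself a $[N]$-filter containing $X$.
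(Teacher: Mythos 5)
Your proof is correct, and it follows the same structural route as the paper: reduce negative tense filters to $[N]$-filters via Lemma~\ref{lem negative tense filters and [N]-filters}, then use normality of $[N]$ (Remark~\ref{rem [u] are normal}) to describe the generated filter. The only difference is that the paper stops there and delegates the actual generation formula to Lemma 3.1(2) and Theorem 3.2 of \cite{H2001}, whereas you prove that formula from scratch: your verification that the right-hand side $S$ is itself a $[N]$-filter (with the identity $[N]^{(k+1)}(a)=a\wedge [N]\bigl([N]^{(k)}(a)\bigr)$, which follows from distributing the normal operator $[N]$ over the finite meet and shifting indices) and that $S$ is contained in every $[N]$-filter containing $X$ is exactly the content of the cited external result. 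So your argument buys self-containedness at the cost of length; the mathematical substance is the same, and all the steps (monotonicity of the iterates $[N]^m$ from normality, the $\max\{k_1,k_2\}$ trick for $\wedge$-closure, the induction for the reverse inclusion) check out.
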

\begin{proof}
It follows from Lemma \ref{lem negative tense filters and [N]-filters} and Lemma 3.1 (2) and Theorem 3.2 of \cite{H2001}.
\end{proof}

We conclude this part with an immediate consequence of Theorem \ref{negative tense generated filter}, the definition of simple algebra and Corollary 3.3 of \cite{H2001}. The following result provides a characterization of simple and subdirectly irreducible tense H-algebras.

\begin{corollary}\label{subdirectly irreducible and simple tH-algebras}
Let $(\mathcal{A},N)$ be a tense H-algebra. Then, the following hold:
\begin{enumerate}
\item $(\mathcal{A},N)$ is subdirectly irreducible if and only if for every $a\in A-\{1\}$, there exist some $b\in A$ and $k\in \mathbb{N}$ such that $[N]^{(k)}(a)\leq b$.
\item $(\mathcal{A},N)$ is simple if and only if for every $a\in A-\{1\}$, there exists some $k\in \mathbb{N}$ such that $[N]^{(k)}(a)= 0$.
\end{enumerate}
\end{corollary}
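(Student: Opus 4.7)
The plan is to reduce both statements to the lattice $\mathcal{F}_t(A)$ of negative tense filters via the isomorphism of Theorem~\ref{t1}, and then translate statements about filters to inequalities involving $[N]^{(k)}$ by means of the explicit description given in Theorem~\ref{negative tense generated filter}. In particular, the principal negative tense filter generated by a single element $a$ is $[a)_N=\{y\in A:[N]^{(k)}(a)\leq y \text{ for some } k\in\mathbb{N}\}$.

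For (2), I would first observe that $(\mathcal{A},N)$ is simple if and only if $\mathrm{Con}_t(A)=\{\Delta,\nabla\}$, which by Theorem~\ref{t1} is equivalent to $\mathcal{F}_t(A)=\{\{1\},A\}$. In turn this amounts to $[a)_N=A$ for every $a\in A\setminus\{1\}$, or equivalently $0\in [a)_N$. Applying Theorem~\ref{negative tense generated filter} to $X=\{a\}$, this says $[N]^{(k)}(a)\leq 0$ for some $k\in\mathbb{N}$, i.e.\ $[N]^{(k)}(a)=0$, which is exactly (2).

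For (1), I would use the standard characterization that $(\mathcal{A},N)$ is subdirectly irreducible if and only if $\mathcal{F}_t(A)$ has a minimum element strictly above the trivial filter $\{1\}$. Since every nontrivial negative tense filter contains a principal negative tense filter $[a)_N$ with $a\neq 1$, this is equivalent to the existence of some fixed $b\in A\setminus\{1\}$ such that $b\in [a)_N$ for every $a\in A\setminus\{1\}$. Expanding $b\in [a)_N$ through Theorem~\ref{negative tense generated filter} yields the desired condition: some witness $b\neq 1$ exists for which, for every $a\neq 1$, one has $[N]^{(k)}(a)\leq b$ for some $k\in\mathbb{N}$. Both directions of (1) follow from this translation; the converse is immediate once one sets the minimum nontrivial filter to be the one generated by $b$.

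The main obstacle I anticipate is technical rather than conceptual: verifying that the witness $b$ must be taken uniformly in $a$ (so that (1) is not trivially satisfied), and checking that the intersection of the principal filters $[a)_N$ over all $a\in A\setminus\{1\}$ is itself generated by such a $b$ precisely when the sub\-directly irreducible condition holds. Since the analogous result for normal operators is already developed in Corollary~3.3 of \cite{H2001}, and since Lemma~\ref{lem negative tense filters and [N]-filters} together with Remark~\ref{rem [u] are normal} places us in exactly that setting, I would expect the verification to reduce to quoting that corollary and rephrasing its conclusion in terms of $[N]^{(k)}$.
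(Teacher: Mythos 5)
Your proposal is correct and follows essentially the same route as the paper, which offers no written proof beyond declaring the corollary an immediate consequence of Theorem~\ref{negative tense generated filter}, the definition of simple algebra, and Corollary~3.3 of \cite{H2001} --- precisely the reduction through $\mathcal{F}_t(A)$ and principal $[N]$-filters that you carry out. Your observation that in item (1) the witness $b$ must be read as uniform in $a$ (and nontrivial), lest the condition be vacuous, is a legitimate and worthwhile clarification of the statement as printed.
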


\subsection{A deduction-detachment theorem for $\textbf{IGN}$}

The Intuitionistic Propositional Logic with Galois Negations (\textbf{IGN}) was introduced in \cite{ML2022} as an expansion of intuitionistic propositional logic (\textbf{IPL}) by two pairs of Galois negations $f,p,g,h$. Such a logic can be considered as the order dualization of Ewald's intuitionistic tense logic $\textbf{IK}_t$ in the sense that Galois pairs of negations are order duals of residuated pairs of tense operators. Such a logic may be defined by the following Hilbert-style calculus:

\vspace{0.2cm}
\textbf{Hilbert axioms}:

\begin{itemize}
\item[(1)] All instances of \textbf{IPL}.
\item[(2)] \(  g\varphi \wedge f\psi \to g(\varphi \vee \psi) \)
\item[(3)] \( f(\varphi \wedge \psi) \to f\varphi \vee g\psi  \)
\item[(4)] \( h\varphi \wedge p\psi \to h(\varphi \vee \psi) \)
\item[(5)] \( p(\varphi \wedge \psi) \to p\varphi \vee h\psi \)
\end{itemize}

\vspace{0.2cm}

\textbf{Hilbert rules}:

\begin{itemize}
    \item[(MP)] \( \{ \varphi \to \psi, \varphi \} \rhd \psi \)
    \item[(RN1)] \( \varphi \rightarrow f\psi \rhd \psi \rightarrow p\varphi\) 
    \item[(RN2)] \(g\varphi \rightarrow \psi \rhd h\psi \rightarrow \varphi. \)
\end{itemize}

Our aim now is to discuss how the logic \( \mathbf{IGN} \) can be algebrized by the variety $\mathcal{HGN}$ in a similar fashion as \( \mathbf{IPL} \) can be algebraized by the variety of Heyting algebras. To do so, we need to recall some notions fist. Let $\mathbf{L}$ be a logic \footnote{A substitution invariant consequence relation (see \cite{BP}).} with a propositional language $\mathcal{L}$.  We write $Fm_{\mathcal{L}}$ for the set of $\mathcal{L}$-formulas formed in the usual way, over a countable set of propositional variables and $\mathbf{Fm}_{\mathcal{L}}$ for the absolutely free algebra of type $\mathcal{L}$. An \emph{${\mathcal{L}}$-equation} is an ordered pair in $(\varphi,\psi)\in Fm_{\mathcal{L}}$. We usually denote the equation $(\varphi,\psi)$ by $\varphi\approx\psi$. The set of all ${\mathcal{L}}$-equations is denoted by $Eq_{{\mathcal{L}}}$. Recall that if $\mathcal{K}$ is a class of similar algebras and $\Theta\cup\{\epsilon \approx \delta\}$ is a set of equations in the type of $\mathcal{K}$, then $\Theta\vDash_{\mathcal{K}}\epsilon \approx \delta$ means that for every $\mathbf{A}\in \mathcal{K}$ and every assignment $m$ of variables into $\mathbf{A}$, if $m(\alpha)=m(\beta)$ for every $\alpha\approx\beta\in\Theta$, then $m(\epsilon)=m(\delta)$. We say that a logic  $\mathbf{L}$ is \emph{Block-Pigozzi algebraizable} (algebraizable, for short) \cite{BP} if there is a class of algebras \( \mathcal{K} \) on the signature of \( \mathbf{Fm} \) and structural transformers \( \tau \colon \mathrm{Fm}_{\mathcal{L}} \to \mathcal{P}\left(\mathrm{Eq}_{\mathcal{L}} \right) \), \( \rho \colon \mathrm{Eq}_{\mathcal{L}} \to \mathcal{P}\left(\mathrm{Fm}_{\mathcal{L}}\right) \) such that for each \( \Gamma, \varphi \subset \mathrm{Fm}_{\mathcal{L}} \) and \( \Theta, \alpha \approx \beta \in \mathrm{Eq}_{\mathcal{L}} \) the following conditions hold:
\begin{itemize}
\item[(1)] \( \Gamma \vdash_{\mathbf{L}} \varphi \) if and only if \( \tau[\Gamma] \vDash_{\mathcal{K}} \tau(\varphi) \),
\item[(2)] \( \varphi \approx \psi \models_{\mathcal{K}} \tau\left( \rho(\varphi \approx \psi) \right) \).
\end{itemize}

It is well known that the logic \( \mathbf{IPL} \) is algebraizable by the variety of Heyting algebras with structural transformers $\tau$ and $\rho$ defined by \( \tau(\varphi)= \{ \varphi \approx 1 \} \) and \( \rho(\varphi \approx \psi) := \{ \varphi \to \psi, \psi \to \varphi \} \), respectively. We stress that with these transformers we can make the logic $\mathbf{IGN}$ algebraizable by the variety $\mathcal{HGN}$. To this end, let \( \Gamma \) be a theory of \( \mathbf{IGN} \). We define the relation $\Omega(\Gamma)$ by
\[ \Omega(\Gamma)= \{ (\varphi,\psi) \in Fm_{\mathcal{L}} \times Fm_{\mathcal{L}} \colon  \varphi \to \psi, \psi \to \varphi \in \Gamma \}. \] 
Straightforward computations show that \( \Omega(\Gamma) \) is a congruence on \( \mathbf{Fm}_{\mathcal{L}} \) compatible with \( \Gamma \), in the sense that \( \gamma \in \Gamma \) if and only if \( \left(\gamma,\top \right) \in \Omega\left( \Gamma \right) \). We write \( [\varphi]_{\Gamma} \) for the equivalence class of \( \varphi \). Let us consider the algebra 
\[ \textbf{Fm}_{\mathcal{L}}/\Omega(\Gamma)= \langle Fm_{\mathcal{L}}/\Omega(\Gamma), \vee, \wedge, \to , f,g,h,p ,\perp , \top \rangle \]
of type \( \mathcal{L}=\{\vee, \wedge, \to , f,g,h,p,\top,\perp\} \) with $\ast:=[ \ast]_{\Gamma}$, for all $\ast \in \mathcal{L}$. The details of the proofs of the following two results are left to the reader.

\begin{lemma} \label{algebra LT} The algebra \( \textbf{Fm}_{\mathcal{L}}/\Omega(\Gamma) \) is a tense H-algebra. Moreover, the relation \( [\varphi]_{\Gamma}  \leq_{\Gamma} [\psi]_{\Gamma}  \) if and only if \( \varphi \to \psi \in \Gamma \) is a partial order on \( \mathbf{Fm}_{\mathcal{L}}/\Omega(\Gamma) \), with largest element \( [\top]_{\Gamma} \).
\end{lemma}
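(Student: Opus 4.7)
The plan is to check four things in turn: that $\Omega(\Gamma)$ is a congruence so that the operations descend to $\mathbf{Fm}_{\mathcal{L}}/\Omega(\Gamma)$; that the quotient is a Heyting algebra; that the negative tense axioms (T1)--(T8) of Definition \ref{d1} hold in the quotient; and that $\leq_{\Gamma}$ is a well-defined partial order with maximum $[\top]_{\Gamma}$. The first point is the standard Lindenbaum--Tarski computation and is stated as straightforward in the excerpt, so I would simply invoke it. For the second, I would recall that Hilbert axiom~(1) provides every instance of \textbf{IPL} and that \textbf{IPL} is Blok--Pigozzi algebraized by the variety of Heyting algebras via the transformers $\tau$, $\rho$ recalled above; the usual Lindenbaum--Tarski construction then equips the quotient with the Heyting operations and identifies $[\top]_{\Gamma}$ and $[\bot]_{\Gamma}$ with $1$ and $0$ respectively.

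For the tense axioms I would first observe that (T7) and (T8) are direct translations via $\tau$ of the Hilbert axioms (2)--(5). Next, the counit/unit inequalities (T5) and (T6) are obtained by applying (RN1) and (RN2) to suitable \textbf{IPL} tautologies: applying (RN2) to $g\varphi\to g\varphi$ yields $h(g\varphi)\to\varphi\in\Gamma$, hence $hg(x)\leq x$, while the symmetric instantiation $\psi:=g\varphi$ followed by a second use of the rule delivers $gh(x)\leq x$; the analogous applications of (RN1) to $f\psi\to f\psi$ (and dually) give (T6). The same computations in fact produce the full Galois equivalences (T11) and (T12) of Lemma \ref{l1} in the quotient. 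From those equivalences the normality laws (T1), (T2) and the join/meet-preservation laws (T3), (T4) drop out as standard consequences of the existence of Galois connections between antitone maps on a bounded lattice: for instance, $g(1)=0$ is read off from $g(x)\leq 0 \iff h(0)\leq x$ by specialising $x=1$, and $g(x\wedge y)=g(x)\vee g(y)$ from the fact that antitone left Galois adjoints convert meets into joins.

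For the order part, well-definedness of $\leq_{\Gamma}$ follows from compatibility of $\Omega(\Gamma)$ with $\to$; reflexivity from the tautology $\varphi\to\varphi$; antisymmetry directly from the definition of $\Omega(\Gamma)$; transitivity from the \textbf{IPL} theorem $(\varphi\to\psi)\to((\psi\to\chi)\to(\varphi\to\chi))$ together with (MP); and maximality of $[\top]_{\Gamma}$ from the tautology $\varphi\to\top$. The main obstacle I anticipate is the bookkeeping in the tense step: (RN1) and (RN2) are formulated one-sidedly, so producing both halves of (T5) and (T6) (and therefore the full Galois equivalences) requires either a bidirectional reading of the rules or a careful double instantiation such as $\psi:=g\varphi$ and $\varphi:=h\psi$ (and dually for $f,p$), and the derivations of (T1)--(T4) must be sequenced after those equivalences are in hand. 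Once the correct order is fixed, the remainder is a routine exercise in the algebraizability formalism.
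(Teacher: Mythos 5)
The paper offers no proof of this lemma---it explicitly leaves the details of this result and of Theorem \ref{algebraizacion} to the reader---so there is no official argument to compare yours against; judged on its own terms, your outline is correct and is the standard Lindenbaum--Tarski verification one would expect. The decomposition is right: (T7) and (T8) are verbatim the Hilbert axioms (2)--(5) read through $\tau$; (T5) and (T6) come from (RN2) and (RN1) applied to the tautologies $g\varphi\to g\varphi$, $h\psi\to h\psi$, $f\psi\to f\psi$, $p\varphi\to p\varphi$; and (T1)--(T4) then follow from the resulting Galois equivalences (T11)--(T12) exactly as in Lemma \ref{l1}. One point should be sharpened. You hedge between ``a bidirectional reading of the rules'' and ``a careful double instantiation'' as ways of obtaining both halves of (T5) and (T6); only the first option works. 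Every conclusion of a one-directional (RN2) has the shape $h\psi\to\varphi$, with $h$ outermost in the antecedent, so no instantiation of it produces $g(h\psi)\to\psi$, and likewise a one-directional (RN1) never produces $\varphi\to f(p\varphi)$. The rules of $\mathbf{IGN}$ as formulated in \cite{ML2022} are invertible (the symbol $\rhd$ stands for interderivability of the two sequents), and that is what must be invoked; the same bidirectionality is also what yields antitonicity of $g,h,f,p$ modulo $\Gamma$ and hence compatibility of $\Omega(\Gamma)$ with the negation connectives, so that derivation logically precedes, rather than follows, the formation of the quotient. The order-theoretic part of your argument is routine and correctly handled.
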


\begin{theorem} \label{algebraizacion} The logic \(  \mathbf{IGN} \) is algebraizable with equivalent algebraic semantic given by the variety \( \mathcal{HGN} \) and the structural transformers \( \tau(\varphi)=\{ \varphi \approx 1 \} \) and  \( \rho(\varphi \approx \psi) := \{ \varphi \to \psi, \psi \to \varphi \} \).

\end{theorem}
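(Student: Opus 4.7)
My plan is to verify the two Blok--Pigozzi conditions (1) and (2) of the paper with the prescribed transformers $\tau(\varphi)=\{\varphi\approx 1\}$ and $\rho(\varphi\approx\psi)=\{\varphi\to\psi,\psi\to\varphi\}$. Condition (2) is immediate: $\tau(\rho(\varphi\approx\psi))=\{\varphi\to\psi\approx 1,\ \psi\to\varphi\approx 1\}$, and in any Heyting algebra $x=y$ iff $x\to y=1=y\to x$, giving the required $\mathcal{K}$-equivalence. I then split condition (1) into the usual soundness/completeness pair.

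For soundness ($\Gamma\vdash_{\mathbf{IGN}}\varphi\Rightarrow\tau[\Gamma]\vDash_{\mathcal{HGN}}\tau(\varphi)$) I would verify that each Hilbert axiom of $\mathbf{IGN}$ becomes a valid identity in $\mathcal{HGN}$ and each rule a valid quasi-equation. The instances of $\mathbf{IPL}$ are sound because every tense H-algebra is, in particular, a Heyting algebra; axioms (2)--(5) translate directly to the inequalities (T7) and (T8) of Definition \ref{d1}; Modus Ponens is justified by residuation. For RN1, if a valuation $v$ into a tense H-algebra satisfies $v(\varphi\to f\psi)=1$, then $v(\varphi)\le f(v(\psi))$, so by (T12) of Lemma \ref{l1} one gets $v(\psi)\le p(v(\varphi))$, i.e.\ $v(\psi\to p\varphi)=1$. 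Rule RN2 is dual, using (T11).

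For completeness I would run the standard Lindenbaum--Tarski argument using Lemma \ref{algebra LT}. Let $\Gamma^{\vdash}=\{\psi:\Gamma\vdash_{\mathbf{IGN}}\psi\}$. By Lemma \ref{algebra LT}, $\mathbf{Fm}_{\mathcal{L}}/\Omega(\Gamma^{\vdash})$ is a tense H-algebra whose largest element is $[\top]_{\Gamma^{\vdash}}$, and the canonical valuation $v\colon x_i\mapsto[x_i]_{\Gamma^{\vdash}}$ sends every $\gamma\in\Gamma$ to $[\top]_{\Gamma^{\vdash}}$ (since $\gamma\leftrightarrow\top\in\Gamma^{\vdash}$ by $\mathbf{IPL}$). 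Assuming $\tau[\Gamma]\vDash_{\mathcal{HGN}}\tau(\varphi)$, instantiating the semantic hypothesis at $v$ yields $[\varphi]_{\Gamma^{\vdash}}=[\top]_{\Gamma^{\vdash}}$, i.e.\ $\varphi\in\Gamma^{\vdash}$, so $\Gamma\vdash_{\mathbf{IGN}}\varphi$.

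The main obstacle I anticipate is the content Lemma \ref{algebra LT} leaves to the reader: showing that $\Omega(\Gamma)$ is compatible with each of $g,h,f,p$, which reduces to proving the congruence scheme $\{\varphi\to\psi,\psi\to\varphi\}\vdash_{\mathbf{IGN}} u(\varphi)\leftrightarrow u(\psi)$ for $u\in\{g,h,f,p\}$. The natural strategy is to derive the key inequalities first---applying RN1 to the tautology $f\psi\to f\psi$ yields $\psi\to pf\psi$ (the syntactic counterpart of (T6)), and applying RN2 to $g\varphi\to g\varphi$ yields $hg\varphi\to\varphi$ (the counterpart of (T5))---and then deduce antitonicity of each of $g,h,f,p$ by iterated use of the Galois rules. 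For instance, combining $hg\varphi\to\varphi$ with an assumed $\varphi\to\psi$ gives $hg\varphi\to\psi$, and feeding this back through RN2 yields $g\psi\to g\varphi$; the cases of $h,f,p$ are treated analogously via RN1, RN2 together with (T5) and (T6).
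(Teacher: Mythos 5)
The paper gives no proof of this theorem (it is explicitly ``left to the reader''), and your plan---verifying the two Blok--Pigozzi conditions, with condition (2) settled by $x=y$ iff $x\to y=1=y\to x$, soundness of axioms (2)--(5) read off from (T7)--(T8) and of RN1, RN2 from (T12), (T11), and completeness via the Lindenbaum--Tarski quotient of Lemma \ref{algebra LT}---is exactly the intended route. The soundness half and the reduction of completeness to Lemma \ref{algebra LT} are correct as written.

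The step that does not go through as stated is your derivation of antitonicity, which is the heart of the part of Lemma \ref{algebra LT} that carries real content. RN2 is printed as the one-way rule \( g\varphi \to \psi \rhd h\psi \to \varphi \); ``feeding $hg\varphi\to\psi$ back through RN2'' to obtain $g\psi\to g\varphi$ is an application of its \emph{converse}, not of RN2. A forward application of RN2 only ever produces a conclusion of the shape $h(\cdot)\to(\cdot)$, and neither RN1 (whose conclusions have shape $(\cdot)\to p(\cdot)$) nor the axioms supply theorems of the shape $g\psi\to g\varphi$; the same converse applications are what one needs to derive $g\top\to\bot$ and $g(\varphi\wedge\psi)\to g\varphi\vee g\psi$, i.e.\ to verify that the quotient satisfies (T1) and (T3)---a portion of Lemma \ref{algebra LT} your sketch does not mention but which the completeness argument also requires. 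The repair is to observe that RN1 and RN2 are two-way Galois rules, i.e.\ that the converses \( \psi \to p\varphi \rhd \varphi \to f\psi \) and \( h\psi \to \varphi \rhd g\varphi \to \psi \) are also primitive (as in the formulation of $\mathbf{IGN}$ in \cite{ML2022}); under that reading your derivations of the counterparts of (T5), (T6), of antitonicity, and of the congruence scheme all go through, and the rest of your argument is sound. You should make this invertibility explicit rather than use it silently.
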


By last, we prove that the logic $ \mathbf{IGN}$ enjoys of a meta-logical property called the local deduction-detachment theorem, which is a generalization of the classical deduction-detachment theorem. To do so, we will make use of the results we obtained in Section \ref{Negative tense filter generation}. We recall that a logic $\mathbf{L}$ has the \textit{local deduction-detachment theorem} (or \emph{LDDT}) if there exists a family $\{d_j(p,q)\colon j\in J\}$ of sets $d_j(p,q)$ of formulas in at most two variables such that for every set $\Gamma\cup \{\varphi,\psi\}$ of formulas in the language of $\mathbf{L}$:
\begin{displaymath}
\begin{array}{ccc}
\Gamma, \varphi \vdash_{\mathbf{L}} \psi &  \Longleftrightarrow & \Gamma  \vdash_{\mathbf{L}} d_{j}(\varphi,\psi)\; \text{for some}\; j\in J.
\end{array}
\end{displaymath}

If $\mathcal{V}$ is a variety and $X$ is a set, we denote by $\mathbf{F_{\mathcal{V}}}(X)$ the $\mathcal{V}$-free algebra over $X$. Moreover, if $\varphi$ is a formula in the language of $\mathbf{L}$, then we write $\bar{\varphi}$ for the image of $\varphi$ under the natural map ${\bf Fm}(X)\to \mathbf{F}_{\mathcal{V}}(X)$ from the term algebra $\mathbf{Fm}(X)$ over $X$ onto $\mathbf{F}_{\mathcal{V}}(X)$. If $\Gamma$ is a set of formulas, we also denote by $\bar{\Gamma}$ the set $\{\bar{\varphi} : \varphi\in\Gamma\}$. The following is a technical result which is essentially restatement of Lemma 2 of \cite{MMT2014}.

\begin{lemma}\label{lem: Technical lemma}
Let $\Theta\cup \{\varphi\approx\psi\}$ be a set of equations in the language of $\mathcal{V}$, and let $X$ be the set of variables occurring in $\Theta\cup\{\varphi\approx\psi\}$. Then the following are equivalent:
\begin{enumerate}
\item $\Theta \models_{\mathcal{V}} \varphi \approx \psi$.
\item $(\bar{\varphi}, \bar{\psi})\in \bigvee_{\epsilon\approx\delta\in\Theta} \mathsf{Cg}^{\mathbf{F}_{\mathcal{V}}(X)}(\bar{\epsilon},\bar{\delta})$.
\end{enumerate}
\end{lemma}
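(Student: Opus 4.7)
The plan is to prove both implications by exploiting the universal property of the $\mathcal{V}$-free algebra over $X$ together with the characterization of congruences as kernels of homomorphisms, which is the standard ``free algebra modulo relations'' argument.

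For the direction $(2) \Rightarrow (1)$, I would fix an arbitrary $\mathbf{A} \in \mathcal{V}$ and an assignment $m\colon X \to A$ that identifies both sides of every equation in $\Theta$. Using the universal property, $m$ extends to a unique homomorphism $\hat m\colon \mathbf{F}_{\mathcal{V}}(X) \to \mathbf{A}$. Its kernel $\ker(\hat m)$ is a congruence on $\mathbf{F}_{\mathcal{V}}(X)$ containing $(\bar\epsilon,\bar\delta)$ for every $\epsilon \approx \delta \in \Theta$, so by minimality of principal congruences and by the definition of supremum in $\mathsf{Con}(\mathbf{F}_{\mathcal{V}}(X))$, we obtain
\[
\bigvee_{\epsilon \approx \delta \in \Theta} \mathsf{Cg}^{\mathbf{F}_{\mathcal{V}}(X)}(\bar\epsilon,\bar\delta) \subseteq \ker(\hat m).
\]
By hypothesis, $(\bar\varphi,\bar\psi)$ lies in the left-hand side, hence $\hat m(\bar\varphi)=\hat m(\bar\psi)$, which is exactly $m(\varphi)=m(\psi)$.

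For the direction $(1) \Rightarrow (2)$, I would take the specific algebra
\[
\mathbf{B} := \mathbf{F}_{\mathcal{V}}(X)\big/ \Theta^{\ast}, \qquad \Theta^{\ast}:=\bigvee_{\epsilon \approx \delta \in \Theta} \mathsf{Cg}^{\mathbf{F}_{\mathcal{V}}(X)}(\bar\epsilon,\bar\delta),
\]
which lies in $\mathcal{V}$ because $\mathcal{V}$ is closed under homomorphic images. Let $\pi\colon \mathbf{F}_{\mathcal{V}}(X)\to \mathbf{B}$ be the canonical projection and define the assignment $m\colon X\to B$ by $m(x):=\pi(\bar x)$. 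By construction, for each $\epsilon \approx \delta \in \Theta$ we have $(\bar\epsilon,\bar\delta)\in \Theta^{\ast}$, so $m(\epsilon)=\pi(\bar\epsilon)=\pi(\bar\delta)=m(\delta)$. Applying the hypothesis $\Theta \models_{\mathcal{V}} \varphi \approx \psi$ to $\mathbf{B}$ with this assignment yields $\pi(\bar\varphi)=\pi(\bar\psi)$, i.e.\ $(\bar\varphi,\bar\psi)\in \Theta^{\ast}$, as required.

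The proof is essentially bookkeeping with free algebras; the only substantive ingredients are the existence of $\mathbf{F}_{\mathcal{V}}(X)$ (Birkhoff's theorem) and its universal property, plus closure of $\mathcal{V}$ under homomorphic images to guarantee $\mathbf{B}\in \mathcal{V}$ in the second half. The most delicate point, if any, is handling the possibly infinite join $\bigvee_{\epsilon \approx \delta \in \Theta} \mathsf{Cg}^{\mathbf{F}_{\mathcal{V}}(X)}(\bar\epsilon,\bar\delta)$ correctly: one must either invoke the lattice-theoretic description of joins in $\mathsf{Con}(\mathbf{F}_{\mathcal{V}}(X))$ (any congruence containing all the principal ones contains the join) or describe elements of the join explicitly via Mal'cev chains. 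Either route suffices for the inclusion $\Theta^{\ast}\subseteq \ker(\hat m)$ used above.
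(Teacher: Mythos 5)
Your proof is correct: both directions are the standard free-algebra argument (kernels of the extension homomorphism for $(2)\Rightarrow(1)$, and the quotient $\mathbf{F}_{\mathcal{V}}(X)/\Theta^{\ast}$ with the canonical assignment for $(1)\Rightarrow(2)$), and the join of the principal congruences is handled properly. The paper gives no proof of this lemma at all --- it simply cites it as a restatement of Lemma 2 of Metcalfe--Montagna--Tsinakis --- and your argument is precisely the standard one underlying that cited result, so there is nothing to object to.
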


\begin{theorem}\label{theo: HGN satisfies parametrized LDDT}
Suppose that $\mathbf{L}$ is an axiomatic extension of $\textbf{IGN}$ that is algebraized by the subvariety $\mathcal{V}$ of $\mathcal{HGN}$ and let $\mathcal{L}$ be the language of $\mathbf{L}$. Further, let $\Gamma\cup \Delta \cup \{\psi\} \subseteq Fm_{\mathcal{L}}$. Then $\Gamma, \Delta \vdash_{\mathbf{L}} \psi$ if and only if for some $n\geq 0$ there exist $k\in \mathbb{N}$ and $\psi_{1},\ldots,\psi_{m}\in \Delta$ such that $\Gamma \vdash_{\mathbf{L}} [N]^{(k)}(\psi_{1}\wedge \ldots \wedge \psi_{m})\rightarrow \psi$.
\end{theorem}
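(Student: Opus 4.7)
The plan is to leverage the algebraization of $\mathbf{IGN}$ together with the description of negative tense filter generation from Section \ref{Negative tense filter generation}. First I would use Theorem \ref{algebraizacion} to translate $\Gamma, \Delta \vdash_{\mathbf{L}} \psi$ into the equational consequence $\{\varphi \approx 1 : \varphi \in \Gamma \cup \Delta\} \models_{\mathcal{V}} \psi \approx 1$ in the free algebra $\mathbf{F}_{\mathcal{V}}(X)$, where $X$ is the set of variables appearing in $\Gamma \cup \Delta \cup \{\psi\}$. By Lemma \ref{lem: Technical lemma}, this is equivalent to $(\bar{\psi}, \bar{1})$ lying in the join $\bigvee_{\varphi \in \Gamma \cup \Delta} \mathsf{Cg}^{\mathbf{F}_{\mathcal{V}}(X)}(\bar{\varphi}, \bar{1})$. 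Theorem \ref{t1} then identifies this congruence with the negative tense filter $[\bar{\Gamma} \cup \bar{\Delta})_N$ of $\mathbf{F}_{\mathcal{V}}(X)$, so the problem reduces to characterizing when $\bar{\psi} \in [\bar{\Gamma} \cup \bar{\Delta})_N$.

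For the forward direction, I would invoke Theorem \ref{negative tense generated filter} to obtain $\gamma_1,\ldots,\gamma_a \in \Gamma$, $\psi_1,\ldots,\psi_m \in \Delta$ and $k \in \mathbb{N}$ such that
\[ [N]^{(k)}(\bar{\gamma}_1 \wedge \cdots \wedge \bar{\gamma}_a \wedge \bar{\psi}_1 \wedge \cdots \wedge \bar{\psi}_m) \leq \bar{\psi}. \]
The crucial step is to factor this using the fact that each $[N]^i$ is normal (Remark \ref{rem [u] are normal}), which transfers to the finite meet $[N]^{(k)}$ and lets me rewrite the left-hand side as $[N]^{(k)}(\bar{\gamma}_1 \wedge \cdots \wedge \bar{\gamma}_a) \wedge [N]^{(k)}(\bar{\psi}_1 \wedge \cdots \wedge \bar{\psi}_m)$. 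Residuation then gives
\[ [N]^{(k)}(\bar{\gamma}_1 \wedge \cdots \wedge \bar{\gamma}_a) \leq [N]^{(k)}(\bar{\psi}_1 \wedge \cdots \wedge \bar{\psi}_m) \to \bar{\psi}. \]
Since $[\bar{\Gamma})_N$ is an $[N]$-filter (Lemma \ref{lem negative tense filters and [N]-filters}) closed under meets, it contains $[N]^{(k)}(\bar{\gamma}_1 \wedge \cdots \wedge \bar{\gamma}_a)$, hence it contains the implication above. Translating back via Theorem \ref{algebraizacion}, this yields $\Gamma \vdash_{\mathbf{L}} [N]^{(k)}(\psi_1 \wedge \cdots \wedge \psi_m) \to \psi$.

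For the reverse direction, I would start from $[N]^{(k)}(\bar{\psi}_1 \wedge \cdots \wedge \bar{\psi}_m) \to \bar{\psi} \in [\bar{\Gamma})_N \subseteq [\bar{\Gamma} \cup \bar{\Delta})_N$. Because $\bar{\psi}_1, \ldots, \bar{\psi}_m \in [\bar{\Gamma} \cup \bar{\Delta})_N$, the meet $\bar{\psi}_1 \wedge \cdots \wedge \bar{\psi}_m$ lies in this filter, and since the filter is an $[N]$-filter we may apply $[N]$ iteratively to conclude $[N]^{(k)}(\bar{\psi}_1 \wedge \cdots \wedge \bar{\psi}_m) \in [\bar{\Gamma} \cup \bar{\Delta})_N$. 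Modus ponens (equivalently, the deductive-system property of filters in a Heyting algebra) then yields $\bar{\psi} \in [\bar{\Gamma} \cup \bar{\Delta})_N$, which via the algebraization gives $\Gamma, \Delta \vdash_{\mathbf{L}} \psi$.

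The main obstacle is the meet-factorization step in the forward direction, which is what enables the separation of the hypotheses $\Gamma$ (kept outside the filter) from the hypotheses $\Delta$ (absorbed into the formula $[N]^{(k)}(\psi_1 \wedge \cdots \wedge \psi_m) \to \psi$). Everything else is a careful bookkeeping exercise through the chain of translations: algebraization $\leftrightarrow$ equational consequence $\leftrightarrow$ congruences $\leftrightarrow$ negative tense filters, together with the explicit generation formula of Theorem \ref{negative tense generated filter}. Once the normality of $[N]^{(k)}$ is acknowledged, the rest follows as a direct consequence.
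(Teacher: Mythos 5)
Your proposal is correct and follows essentially the same route as the paper's proof: algebraization (Theorem \ref{algebraizacion}), the technical Lemma \ref{lem: Technical lemma}, the congruence--filter correspondence of Theorem \ref{t1}, the generation formula of Theorem \ref{negative tense generated filter}, and then the normality of $[N]^{(k)}$ (Remark \ref{rem [u] are normal}) to split the generators between $\Gamma$ and $\Delta$ before residuating. The only difference is that you spell out the converse direction explicitly, which the paper leaves as ``analogous.''
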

\begin{proof}
We only present the left-right direction of the proof due to the proof of the converse is analogue. Let us assume that \( \Gamma \cup \Delta \vdash_{\mathbf{IGN}} \varphi \). Then from Theorem \ref{algebraizacion} we get that $ \{ \psi \approx 1 \colon \psi \in \Gamma \cup \Delta \} \vDash_{\mathcal{HGN}} \varphi \approx 1 $. Therefore, by Lemma \ref{lem: Technical lemma} it is the case that \( \left( \overline{\varphi},\overline{1} \right) \in \bigvee_{\psi \in \Gamma \cup \Delta} \mathsf{Cg}^{\mathbf{F}_{\mathrm{V}}(X)} \left( \overline{\psi},\overline{1} \right) \).  Notice that as a consequence of Theorem \ref{t1} and the latter, we get $\overline{\varphi} \in [\overline{\Gamma} \cup \overline{\Delta})_{N}$. Hence, by Theorem \ref{negative tense generated filter}, there exist $k\in \mathbb{N}$ and $\overline{\psi}_{1},\ldots,\overline{\psi}_{n}\in \overline{\Gamma} \cup \overline{\Delta}$ such that 

\[[N]^{(k)}(\overline{\psi}_{1}\wedge \ldots \wedge\overline{\psi}_{n})\leq \overline{\varphi}.\]

Let \( C = \{ i \in \{1,...,n\} \colon \psi_{i} \in \Gamma \} \) and \( D = \{1,..,n\} \setminus C \), then from Remark \ref{rem [u] are normal} we have that 
\[ \bigwedge_{i \in C} [N]^{(k)}\left( \overline{\psi}_{i} \right) \wedge  \bigwedge_{j \in D} [N]^{(k)}\left( \overline{\psi}_{j} \right) \leq \overline{\varphi}. \]
Thus, by residuation we have that 
\[ \bigwedge_{i \in C} [N]^{(k)}\left( \overline{\psi}_{i} \right) \leq  \bigwedge_{j \in D} [N]^{(k)}\left( \overline{\psi}_{j} \right) \to \overline{\varphi}. \]
From Theorem \ref{negative tense generated filter}, the latter means that \( \bigwedge_{j \in D} [N]^{(k)}\left( \overline{\psi}_{i} \right) \to \overline{\varphi} \in  [\overline{\Gamma})_{N} \). Finally, by Lemma \ref{lem: Technical lemma}, Theorem \ref{algebraizacion} and Remark \ref{rem [u] are normal}, we have that \(\Gamma \vdash_{\mathbf{L}} [N]^{(k)}(\bigwedge_{j \in D} \psi_{j})\rightarrow \varphi \), as claimed.
\end{proof}

We stress that if we take $\Delta = \{\varphi\}$ and we consider $d_{l} (p, q) = [N]^{l}(p) \rightarrow q$ for $l \in \mathbb{N}$, then from Theorem \ref{theo: HGN satisfies parametrized LDDT}, we can conclude the following:

\begin{corollary}\label{LDDT}
The logic $\textbf{IGN}$ has the LDDT.
\end{corollary}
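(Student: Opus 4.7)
The plan is to extract the LDDT witnesses directly from Theorem \ref{theo: HGN satisfies parametrized LDDT} by specializing to singleton $\Delta$. Concretely, take the index set $J=\mathbb{N}$ and set $d_{l}(p,q):=[N]^{(l)}(p)\to q$ for every $l\in\mathbb{N}$. I then need to verify that for every set of formulas $\Gamma\cup\{\varphi,\psi\}$ in the language of $\textbf{IGN}$,
\[
\Gamma,\varphi\vdash_{\textbf{IGN}}\psi \ \Longleftrightarrow\ \Gamma\vdash_{\textbf{IGN}} d_{l}(\varphi,\psi)\ \text{for some}\ l\in\mathbb{N}.
\]

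For the left-to-right direction, I would apply Theorem \ref{theo: HGN satisfies parametrized LDDT} with $\Delta=\{\varphi\}$. From $\Gamma,\varphi\vdash_{\textbf{IGN}}\psi$ the theorem delivers some $k\in\mathbb{N}$ and formulas $\psi_{1},\ldots,\psi_{m}\in\{\varphi\}$ with $\Gamma\vdash_{\textbf{IGN}} [N]^{(k)}(\psi_{1}\wedge\cdots\wedge\psi_{m})\to\psi$. Since every $\psi_{i}$ is $\varphi$, the conjunction $\psi_{1}\wedge\cdots\wedge\psi_{m}$ is provably equivalent to $\varphi$ by idempotence of $\wedge$ in \textbf{IPL}; using the algebraizability established in Theorem \ref{algebraizacion}, which guarantees that provably equivalent antecedents can be substituted, one concludes $\Gamma\vdash_{\textbf{IGN}} [N]^{(k)}(\varphi)\to\psi$, i.e.\ $\Gamma\vdash_{\textbf{IGN}} d_{k}(\varphi,\psi)$.

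For the converse, suppose $\Gamma\vdash_{\textbf{IGN}} [N]^{(k)}(\varphi)\to\psi$ for some $k\in\mathbb{N}$. It suffices to show $\varphi\vdash_{\textbf{IGN}} [N]^{(k)}(\varphi)$, since then modus ponens and weakening yield $\Gamma,\varphi\vdash_{\textbf{IGN}}\psi$. By Lemma \ref{lem negative tense filters and [N]-filters}, negative tense filters coincide with $[N]$-filters, so the negative tense filter $[\varphi)_{N}$ generated by $\varphi$ is closed under $[N]$; by Theorem \ref{negative tense generated filter}, $[N]^{(k)}(\varphi)\in [\varphi)_{N}$ for every $k$. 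Translating this back into the logic via Theorem \ref{algebraizacion} gives $\varphi\vdash_{\textbf{IGN}} [N]^{(k)}(\varphi)$, as required.

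The only subtlety worth flagging is the passage between the algebraic filter statements and the corresponding syntactic deductions; but this translation is routine in the presence of Block--Pigozzi algebraizability, so no genuine obstacle remains once the parametrized deduction theorem is specialized to $\Delta=\{\varphi\}$.
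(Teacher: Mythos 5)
Your proposal is correct and follows exactly the paper's route: the paper also obtains the corollary by specializing Theorem \ref{theo: HGN satisfies parametrized LDDT} to $\Delta=\{\varphi\}$ with the witnessing family $d_{l}(p,q)=[N]^{(l)}(p)\to q$. The extra details you supply (idempotence of $\wedge$ to collapse $\psi_{1}\wedge\cdots\wedge\psi_{m}$ to $\varphi$, and the observation that $\varphi\vdash_{\mathbf{IGN}}[N]^{(k)}(\varphi)$ for the converse, which in fact already follows from the right-to-left direction of the theorem) are sound and merely make explicit what the paper leaves implicit.
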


\section{Duality for negative tense Heyting algebras}\label{Duality for negative tense Heyting algebras}

We recall that Esakia duality for Heyting algebras (see \cite{Esakia}) establishes a dual equivalence between the category $\mathbb{HA}$
of Heyting algebras and homomorphisms of Heyting algebras and the category $\mathbb{HS}$ of Heyting spaces and $p$-continuous morphisms (called Heyting morphisms),

\begin{center}
$\mathfrak{X}: \mathbb{HA}\leftrightarrows \mathbb{HS}^{\rm op}:{D}$
\end{center}

For every Heyting algebra $A,$ $\mathfrak{X}(A)$ denotes the set of prime filters of $A$. For every Heyting space $X,$ $D(X)$ denotes the set of clopen upsets of $X$.

We have that $\sigma_{A}(x):=\{P\in \mathfrak{X}(A): x\in P\}$ is an isomorphism of Heyting algebras between $A$ and $D(\mathfrak{X}(A))$ and $\varepsilon_{X}(x):=\{U\in D(X): x\in U\}$ is an isomorphism of Heyting spaces between $X$ and $\mathfrak{X}(D(X))$. Both isomorphisms are natural.

In this section, we extend Heyting duality to the category of tense H-algebras. In what follows, if $(X,\leq)$ is a poset (i.e. partially ordered set) and $Y \subseteq X$, then we will denote by $\downarrow Y$ $(\uparrow  Y )$ the set of all $x \in X$ such that $x \leq y$ $(y \leq x)$ for some $y \in Y$. 

\begin{definition} A tense H-space is a system $(X,R)$ where $X$ is a Heyting space, $R$ is a binary relation on $X$ and $R^{-1}$ is the converse of $R$ such that the following conditions are satisfied:

\begin{itemize}
    \item [(S1)] For each $x \in X$, $R(x)$ and $R^{-1}(x)$ are closed subsets of $X$.
    \item [(S2)] For each $x\in X$, $R(x)=\downarrow R(x)\cap \uparrow R(x)$.
    \item [(S3)] For each $U\in D(X)$ it holds that $g_{R}(U),$ $h_{R}(U)$, $f_{R}(U)$, $p_{R}(U)\in D(X),$  where $g_{R}(U), h_{R}(U), f_{R}(U)$ and $p_{R}(U)$ are defined as in \ref{g}, \ref{h}, \ref{f} and \ref{p}, respectively.
\end{itemize}
\end{definition}

\begin{definition} A tense H-function from a tense H-space $(X_1,R_1)$ into another one $(X_2,R_2)$, is a Heyting morphism $k:X_{1}\longrightarrow X_{2}$, which satisfies the following conditions: 
\begin{itemize}
    \item[(m1)] $(x,y)\in R_1$ implies $(k(x),k(y))\in R_2$,
    \item [(m2)] if $(k(x),y)\in R_2$, then there is   $z\in X_1$ such that $(x,z)\in R_1$ and $k(z)\leq y$, 
    \item[(m3)] if $(y,k(x))\in R_2$, then there is   $z\in X_1$ such that $(z,x)\in R_1$ and $k(z)\leq y$, 
    \item[(m4)] if $(k(x),y)\in R_2$, then there is   $z\in X_1$ such that $(x,z)\in R_1$ and $y\leq k(z)$,
    \item[(m5)]  if $(y,k(x))\in R_2$, then there is   $z\in X_1$ such that $(z,x)\in R_1$ and $ y\leq k(z)$ .
\end{itemize}
\end{definition}

\begin{lemma}\label{lt} Let $(X,R)$ be a tense H-space. If $x,y\in X$ and $(x,y)\notin R,$ then there exists $U\in D(X)$ such that $x\in f_{R}(U)$ and $y\in U$ or there exists $V\in D(X)$ such that $y\notin V$ and $x\notin g_{R}(V).$
\end{lemma}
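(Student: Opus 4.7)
The goal is to separate the point $y$ from the closed set $R(x)$ using a clopen upset. My plan is to use the convexity condition (S2) to split into two cases, apply Priestley-style separation pointwise, and then compactify using that $R(x)$ is closed (hence compact) in the Heyting space $X$.

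First, since $(x,y)\notin R$, that is $y\notin R(x)$, condition (S2) tells us that $y\notin \downarrow R(x)$ or $y\notin \uparrow R(x)$. I would treat these two cases separately, since they correspond directly to the two disjuncts in the conclusion: the first should give the clopen upset $U$ witnessing $x\in f_R(U)$ and $y\in U$, while the second should give $V$ witnessing $y\notin V$ and $x\notin g_R(V)$.

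In the first case, $y\not\le w$ for every $w\in R(x)$. By the Priestley separation property of the Heyting space $X$, for each such $w$ there is a clopen upset $U_w$ with $y\in U_w$ and $w\notin U_w$. Then $\{X\setminus U_w : w\in R(x)\}$ is an open cover of the closed (hence compact) set $R(x)$ by (S1), so finitely many $U_{w_1},\dots,U_{w_n}$ suffice, and $U:=U_{w_1}\cap\cdots\cap U_{w_n}$ is a clopen upset with $y\in U$ and $R(x)\cap U=\emptyset$, i.e.\ $R(x)\subseteq X\setminus U$, which by the definition of $f_R$ means $x\in f_R(U)$.

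In the second case, $w\not\le y$ for every $w\in R(x)$, so Priestley separation yields, for each such $w$, a clopen upset $V_w$ with $w\in V_w$ and $y\notin V_w$. The family $\{V_w : w\in R(x)\}$ now openly covers $R(x)$; by compactness take a finite subcover and set $V:=V_{w_1}\cup\cdots\cup V_{w_n}$, a clopen upset. Then $y\notin V$ and $R(x)\subseteq V$, i.e.\ $R(x)\cap (X\setminus V)=\emptyset$, which by the definition of $g_R$ gives $x\notin g_R(V)$. The only delicate point is to keep track of which direction of the Priestley order one must separate in each case so that the resulting clopen set is still an \emph{upset}; this is why the split via (S2) into $\downarrow R(x)$ versus $\uparrow R(x)$ is essential and why the conclusion is a disjunction rather than a single statement.
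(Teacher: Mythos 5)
Your proof is correct and follows essentially the same route as the paper: split via (S2) into the cases $y\notin\,\downarrow R(x)$ and $y\notin\,\uparrow R(x)$, separate $y$ from each point of $R(x)$ with a Priestley clopen upset in the appropriate direction, and use compactness of the closed set $R(x)$ to pass to a single clopen upset witnessing the corresponding disjunct. In fact your write-up is the more careful one: the paper treats only the first case explicitly and, after constructing sets $U_z$ that all contain $y$, states the conclusion of the \emph{second} disjunct ($y\notin V$ and $x\notin g_R(V)$), whereas its own construction yields $y\in V$ and $R(x)\cap V=\emptyset$, i.e.\ $x\in f_R(V)$ --- exactly the first disjunct, matched to the first case as you have it.
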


\begin{proof} Let $x,y\in X$ such that $y\notin R(x)$. From (S2), we have that $y\notin \downarrow R(x)$ or $y\notin \uparrow R(x).$ In the first case, suppose that $y\notin \downarrow R(x)$. Then, for all $z\in R(x),$ $y\not\leq z$. As $X$ is Priestley space, for each $z\in R(x)$ there exists $U_z\in D(X)$ such that $y\in U_z$ and $z\notin U_z$. Then $R(x)\subseteq \bigcup\limits_{z\in R(x)}(X\setminus U_z)$. Since $R(x)$ is compact, from the preceding assertion we infer that there is $V\in D(X)$ such that $y\notin V$ and $R(x)\cap V=\emptyset$. It follows that $x\notin g_{R}(V).$ Similarlty, we can prove the second case.
\end{proof}

Next we will show that the category {\bf tHS} of tense H-spaces and tense H-functions is dually equivalent 
to the category {\bf tHA} of tense H-algebras and tense H-homomorphisms.

Now we will show a characterization of tense H-functions which will be
useful later.

\begin{lemma}\label{l5} Let $(X_1, R_1)$ and $(X_2, R_2)$ be two tense H-spaces. If $k:X_{1}\longrightarrow X_{2}$ is a tense H-morphism, then the following conditions are satisfied:

\begin{itemize}
        \item [{\rm (M1)}] $(x,y)\in R_1$ implies $(k(x),k(y))\in R_2$,
        \item [{\rm (M2)}] $k^{-1}(g_{R_2}(U))=g_{R_1}(k^{-1}(U))$ for any $U\in D(X_2),$
        \item [{\rm (M3)}] $k^{-1}(h_{R_2}(U))=h_{R_1}(k^{-1}(U))$ for any $U\in D(X_2),$
        \item [{\rm (M4)}] $k^{-1}(f_{R_2}(U))=f_{R_1}(k^{-1}(U))$ for any $U\in D(X_2),$
                \item [{\rm (M5)}] $k^{-1}(p_{R_2}(U))=p_{R_1}(k^{-1}(U))$ for any $U\in D(X_2).$
    \end{itemize}

\end{lemma}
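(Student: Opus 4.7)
The plan is to treat each of (M1)--(M5) in turn. Statement (M1) is literally condition (m1), so nothing has to be shown there. For the remaining four identities we verify each set equality by double inclusion, exploiting the set-theoretic definitions \ref{g}--\ref{p} of $g_{R}, h_{R}, f_{R}, p_{R}$ together with the fact that every $U \in D(X_{2})$ is an upset in the Priestley order on $X_{2}$.

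For (M2), the inclusion $g_{R_{1}}(k^{-1}(U)) \subseteq k^{-1}(g_{R_{2}}(U))$ is immediate from (m1): if $(x,z)\in R_{1}$ with $k(z)\notin U$, then $(k(x),k(z))\in R_{2}$ by (m1), and $k(z)$ witnesses $k(x)\in g_{R_{2}}(U)$. The converse inclusion uses (m2): if $y\in X_{2}$ satisfies $(k(x),y)\in R_{2}$ and $y\notin U$, then (m2) yields $z\in X_{1}$ with $(x,z)\in R_{1}$ and $k(z)\leq y$; since $U$ is an upset and $y\notin U$, also $k(z)\notin U$, so $z$ witnesses $x\in g_{R_{1}}(k^{-1}(U))$. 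Identity (M3) is obtained by the same argument, with $R_{1},R_{2}$ replaced by $R_{1}^{-1},R_{2}^{-1}$ and (m2) replaced by (m3).

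For (M4), the inclusion $k^{-1}(f_{R_{2}}(U))\subseteq f_{R_{1}}(k^{-1}(U))$ again uses (m1): any $z\in R_{1}(x)$ satisfies $k(z)\in R_{2}(k(x))$, and the assumption $k(x)\in f_{R_{2}}(U)$ gives $k(z)\notin U$, so $z\notin k^{-1}(U)$. The reverse inclusion uses (m4): given $(k(x),y)\in R_{2}$, (m4) furnishes $z\in X_{1}$ with $(x,z)\in R_{1}$ and $y\leq k(z)$; the hypothesis $x\in f_{R_{1}}(k^{-1}(U))$ forces $k(z)\notin U$, and since $U$ is an upset this propagates downward to $y\notin U$, showing $R_{2}(k(x))\cap U=\emptyset$. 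Identity (M5) is proved analogously, passing to $R^{-1}$ and invoking (m5) in place of (m4).

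The only delicate point is bookkeeping: each of (m2)--(m5) must be matched to the correct operator and the direction of the inequality it produces must be tracked carefully, because for (M2)--(M3) the upset property of $U$ is applied in the form ``$k(z)\leq y$ and $y\notin U$ imply $k(z)\notin U$'', whereas for (M4)--(M5) it is applied in the opposite form ``$y\leq k(z)$ and $k(z)\notin U$ imply $y\notin U$''. This asymmetry reflects the negative character of the operators, which interchanges the roles of the existential and universal quantifiers compared with the standard modal case. No further ingredient beyond (m1)--(m5), the definitions of $g_{R},h_{R},f_{R},p_{R}$, and the fact that elements of $D(X_{2})$ are upsets is required.
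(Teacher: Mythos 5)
Your proof is correct and follows essentially the same route as the paper: unwind the definitions of $g_{R},h_{R},f_{R},p_{R}$, use (m1) for one inclusion and the appropriate back-condition (m2)--(m5) plus the fact that members of $D(X_2)$ are upsets for the other. Your treatment of the inclusion $k^{-1}(f_{R_2}(U))\subseteq f_{R_1}(k^{-1}(U))$ is in fact slightly cleaner than the paper's, which argues by contradiction and invokes (m4) redundantly where (m1) alone suffices.
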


\begin{proof} We will only prove (M2) and (M4).

\noindent (M2): Let $x\in k^{-1}(g_{R_2}(U))$. Hence, $k(x)\in g_{R_2}(U)$. By \ref{g}, we have that $R_{2}(k(x))\cap (X_2\setminus U)\not=\emptyset$. Then, there exists $y\in R_{2}(k(x))$ such that $y\notin U$. By (m2), there exists $z\in X_1$ such that $(x,z)\in R_{1}$ and $k(z)\leq y$. So, $k(z)\notin U$. Then, $z\in R_{1}(x)\cap (X_1\setminus k^{-1}(U))$. Therefore, $x\in g_{R_1}(k^{-1}(U))$. On the other hand, suppose that $x\in g_{R_1}(k^{-1}(U))$. By \ref{g}, there exists $y\in R_{1}(x)$ such that $k(y)\notin U$. Hence, by (m1) we have that $k(y)\in R_{2}(k(x))\cap (X_2\setminus U)$. So, $k(x)\in g_{R_2}(U)$. Therefore, $x\in k^{-1}(g_{R_2}(U))$.

\noindent (M4): Let $x\in k^{-1}(f_{R_2}(U))$ and suppose that $x\notin f_{R_1}(k^{-1}(U))$. Then, there exists $y\in R_{2}(k(x))$ such that $y\in U$. Hence, by (m4) there exists $z\in X_{1}$ such that $(x,z)\in R_{1}$ and $y\leq k(z)$. So, $k(z)\in U$. Then, from (M1), we have that $(k(x),k(z))\in R_{2}$. Therefore, $R_{2}(k(z))\cap U\not=\emptyset$ which is a contradiction. Now assume that $x\in f_{R_1}(k^{-1}(U))$ and suppose that $k(x)\notin f_{R_2}(U)$. Then, by \ref{f} there exists $y\in R_{2}(k(x))$ such that $y\in U$.  From this statement and (m4) there exists $z\in X_{1}$ such that $(x,z)\in R_{1}$ and $y\leq k(z)$. So, $k(z)\in U$. Therefore, $z\in R_{1}(x)\cap k^{-1}(U)$ which contradicts the hypothesis.
\end{proof}

\begin{lemma}\label{l6} Let $(X_1, R_1)$ and $(X_2, R_2)$ be two tense H-spaces. If $k:X_{1}\longrightarrow X_{2}$ is a Heyting morphism satisfying {\rm (M1)} to {\rm (M5)}, then $k:X_{1}\longrightarrow X_{2}$ is a tense H-morphism.  
\end{lemma}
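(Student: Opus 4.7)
The plan is to verify (m1)--(m5) one by one. Condition (m1) coincides with (M1), so nothing is needed there. For (m2)--(m5) I would argue by contradiction, combining Priestley separation inside $X_2$ with the compactness of $R_1(x)$ and $R_1^{-1}(x)$ (which holds by (S1)), and then using the exchange formulas (M2)--(M5) to transport the information back onto $R_2(k(x))$.

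For (m2), I would assume $(k(x),y)\in R_2$ while no $z\in R_1(x)$ satisfies $k(z)\leq y$. Since $k(z)\not\leq y$, Priestley separation yields a clopen upset $W_z\in D(X_2)$ with $k(z)\in W_z$ and $y\notin W_z$; as $k$ is a Heyting morphism, $k^{-1}(W_z)$ is a clopen upset of $X_1$ containing $z$, so these sets cover the compact set $R_1(x)$ and a finite subcover produces $W\in D(X_2)$ with $R_1(x)\subseteq k^{-1}(W)$ and $y\notin W$. This inclusion means $x\notin g_{R_1}(k^{-1}(W))$, whence by (M2) one has $k(x)\notin g_{R_2}(W)$, i.e.\ $R_2(k(x))\subseteq W$, contradicting $y\in R_2(k(x))\setminus W$. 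The argument for (m3) is identical after replacing $R$ by $R^{-1}$, $g$ by $h$, and (M2) by (M3).

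For (m4), the separation must be reversed. Assuming $(k(x),y)\in R_2$ and that no $z\in R_1(x)$ satisfies $y\leq k(z)$, then for each $z\in R_1(x)$ I pick a clopen downset $D_z\subseteq X_2$ with $k(z)\in D_z$ and $y\notin D_z$ (the complement of a clopen upset separating $y$ from $k(z)$). Since $k$ is a Heyting morphism, $k^{-1}(D_z)$ is a clopen downset of $X_1$ containing $z$, and compactness of $R_1(x)$ yields finitely many $D_{z_1},\ldots,D_{z_n}$ such that $U:=X_2\setminus(D_{z_1}\cup\cdots\cup D_{z_n})$ lies in $D(X_2)$, contains $y$, and satisfies $R_1(x)\cap k^{-1}(U)=\emptyset$. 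The last equality says $x\in f_{R_1}(k^{-1}(U))$, so (M4) gives $k(x)\in f_{R_2}(U)$, forcing $y\notin U$, a contradiction. Condition (m5) is handled in the same manner using $R_1^{-1}$, $p$, and (M5).

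The only delicate point is the choice of separation: clopen upsets for the ``$k(z)\leq y$''-conditions (m2)--(m3) and clopen downsets for the ``$y\leq k(z)$''-conditions (m4)--(m5), so that after pulling back along $k$ and invoking compactness of $R_1(x)$ (resp.\ $R_1^{-1}(x)$), the resulting clopen upset of $X_2$ is in $D(X_2)$ in exactly the form demanded by the relevant identity among (M2)--(M5).
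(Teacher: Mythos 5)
Your proof is correct and follows essentially the same route as the paper's: for (m2) and (m3) you separate $k(z)$ from $y$ by clopen upsets, use compactness of $R_1(x)$ (resp.\ $R_1^{-1}(x)$) to get a single $U\in D(X_2)$, and transport via (M2)/(M3); for (m4) and (m5) you reverse the separation and use (M4)/(M5), exactly as in the paper (which phrases your clopen downsets as complements of clopen upsets). The only difference is that you spell out all four cases where the paper treats only (m2) and (m4) and leaves the $R^{-1}$ cases to symmetry.
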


\begin{proof} We will only prove (m2) and (m4).

\noindent (m2): Let $y\in R_{2}(k(x))$ and suppose that for any $z\in R_{1}(x)$ it is verified that $k(z)\not\leq y$. As $X$ is a Priestley space, for any $z\in R_{1}(x)$ there exists $U_z\in D(X_2)$ such that $k(z)\in U_z$ and $y\notin U_z$. Hence, $R_{1}(x)\subseteq \bigcup\limits_{z\in R_{1}(x)}k^{-1}(U_z)$. Since $k:X_{1}\longrightarrow X_{2}$ is a continuous function and $R_{1}(x)$ is compact, we infer that there is $U\in D(X_2)$ such that $y\notin U$ and $R_{1}(x)\cap (X_{1}\setminus k^{-1}(U))=\emptyset$,  from which it follows that $x\notin g_{R_1}(k^{-1}(U))$. Thus, by (M4) we have that $k(x)\notin g_{R_2}(U),$ which is a contradiction.

\noindent (m4): Let $y\in R_{2}(k(x))$ and suppose that for each $z\in R_{1}(x)$, $y\not\leq k(z)$. As $X$ is a Priestley space, for any $z\in R_{1}(x)$ there exists $U_z\in D(X_2)$ such that $y\in U_z$ and $k(z)\not\in U_z$. Hence, $R_{1}(x)\subseteq \bigcup\limits_{z\in R_{1}(x)}k^{-1}(X_2\setminus U_z)$. Since $k:X_{1}\longrightarrow X_{2}$ is a continuous function and $R_{1}(x)$ is compact, we infer that there is $U\in D(X_2)$ such that $y\in U$ and $R_{1}(x)\subseteq X_1\setminus k^{-1}(U)$. So, $x\in f_{R_1}(k^{-1}(U))$. Then, from (M4) we have $R_{2}(k(x))\subseteq X_2\setminus U$, which contradicts the hypothesis.
\end{proof}

As a consequence of the Lemma \ref{l5}  and Lemma \ref{l6} we obtain the following result:

\begin{proposition}\label{p1}
Let $(X_1, R_1)$ and $(X_2, R_2)$ be two tense H-spaces. Then, the following conditions are equivalent:

\begin{itemize}
    \item [{\rm (i)}] $k:X_{1}\longrightarrow X_{2}$ is a tense H-morphism,
    \item [{\rm (ii)}] $k:X_{1}\longrightarrow X_{2}$ is a Heyting morphism which satisfies {\rm (M1)} to {\rm (M5)}.
\end{itemize}

\end{proposition}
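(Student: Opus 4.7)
The plan is to obtain Proposition \ref{p1} as a direct packaging of the two preceding lemmas. Unfolding the definitions, condition (i) says that $k$ is a Heyting morphism satisfying (m1)--(m5), while (ii) says that $k$ is a Heyting morphism satisfying (M1)--(M5). Since (m1) and (M1) are literally the same clause, the content of the proposition reduces to the equivalence between the cluster (m2)--(m5) and the cluster (M2)--(M5), under the common assumption that $k$ is a Heyting morphism between tense H-spaces.

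For the direction (i) $\Rightarrow$ (ii), I would simply invoke Lemma \ref{l5}, which asserts precisely that (M1)--(M5) hold whenever $k$ is a tense H-morphism. For the converse direction (ii) $\Rightarrow$ (i), I would invoke Lemma \ref{l6}, which states that a Heyting morphism satisfying (M1)--(M5) automatically verifies the back-and-forth conditions (m1)--(m5) and thus qualifies as a tense H-morphism. The biconditional follows by combining these two implications.

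I do not anticipate any genuine obstacle here: the substantive work has already been carried out in Lemmas \ref{l5} and \ref{l6}, whose proofs handle the representative cases (M2) and (M4) in one direction and (m2) and (m4) in the other, with the remaining cases being symmetric. The only point requiring care is the bookkeeping between the relational conditions (m2)--(m5), phrased as back-and-forth clauses on $R_1$ and $R_2$, and the operator conditions (M2)--(M5), phrased as commutation with $g_R$, $h_R$, $f_R$ and $p_R$; that translation is what Lemma \ref{lt} makes available in the non-trivial direction, so no new calculation beyond citing the two lemmas is required.
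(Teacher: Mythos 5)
Your proof is correct and follows exactly the paper's route: the proposition is obtained in the paper as an immediate consequence of Lemma \ref{l5} (for (i)$\Rightarrow$(ii)) and Lemma \ref{l6} (for (ii)$\Rightarrow$(i)), with no further argument. The only slight inaccuracy is your closing attribution of the translation to Lemma \ref{lt}; the nontrivial direction in Lemma \ref{l6} rests on a compactness argument in the Priestley space rather than on Lemma \ref{lt} itself, but this does not affect the validity of your proof.
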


\begin{lemma}\label{lemma8} If $(X, R)$ is a tense H-space, then $(D(X),g_{R},h_{R},f_{R},p_{R})$ is a tense H-algebra.
\end{lemma}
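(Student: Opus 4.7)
The plan is to verify, axiom by axiom, that the structure $(D(X), g_R, h_R, f_R, p_R)$ satisfies conditions (T1)--(T8) of Definition~\ref{d1}. Since (S3) already guarantees that each of $g_R(U), h_R(U), f_R(U), p_R(U)$ lies in $D(X)$ whenever $U \in D(X)$, the operators are genuinely defined on $D(X)$ and we only need to check the equational/inequational laws. Every verification reduces to a routine unwinding of the definitions in (\ref{g})--(\ref{p}), combined with elementary set-theoretic reasoning about $R(x)$, $R^{-1}(x)$, and complements; no topology beyond (S3) is required.

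For the boundary conditions (T1) and (T2), I would just compute: $g_R(X) = \{x : R(x) \cap \emptyset \neq \emptyset\} = \emptyset$ and $f_R(\emptyset) = \{x : R(x) \subseteq X\} = X$, analogously for $h_R$ and $p_R$. For the additivity laws (T3) and (T4), De Morgan gives $X \setminus (U \cap V) = (X \setminus U) \cup (X \setminus V)$; distributing intersection with $R(x)$ over the union yields (T3), and the dual reasoning with $R(x) \subseteq (X \setminus U) \cap (X \setminus V)$ gives (T4). In both cases the statements for $h_R, p_R$ are obtained by replacing $R$ with $R^{-1}$.

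For the Galois connections (T5) and (T6), the key observations are the purely relational facts that $x R y$ and $R^{-1}(y) \subseteq U$ force $x \in U$, and dually $x \in U$ and $y R x$ force $R(y) \cap U \neq \emptyset$. Spelling these out gives $g_R h_R(U) \subseteq U$, $h_R g_R(U) \subseteq U$, $U \subseteq p_R f_R(U)$, and $U \subseteq f_R p_R(U)$. Finally, for the mixed laws (T7) and (T8) I would chase points: if $x \in g_R(U) \cap f_R(V)$, choose a witness $y$ with $x R y$ and $y \notin U$; since $R(x) \subseteq X \setminus V$, that same $y$ also satisfies $y \notin V$, hence $y \notin U \cup V$, which gives (T7). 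For (T8), if $x \in f_R(U \cap V)$ but $x \notin g_R(V)$, then every $R$-successor of $x$ lies in $V$ but not in $U \cap V$, hence outside $U$, so $x \in f_R(U)$. The statements for $h_R, p_R$ follow by passing to $R^{-1}$.

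None of the individual verifications is difficult; the only mild subtlety is keeping straight the two pairs of quantifier-dual operators $(g_R, f_R)$ and $(h_R, p_R)$ while interchanging $R$ and $R^{-1}$ at the right moment. There is no genuine obstacle, and the arguments for $h_R, p_R$ are literally obtained from those for $g_R, f_R$ by replacing $R$ with $R^{-1}$, so only the four cases with $g_R$ and $f_R$ need to be written out explicitly.
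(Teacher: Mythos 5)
Your proposal is correct and follows essentially the same route as the paper: a direct, axiom-by-axiom verification of (T1)--(T8) by unwinding the definitions of $g_R, h_R, f_R, p_R$, with the $h_R, p_R$ cases obtained by passing to $R^{-1}$. The paper writes out only (T7) and (T8), and your point-chasing arguments for these coincide with the paper's (your (T8) is just the contrapositive form of the same implication), while your sketches of the remaining, easier axioms are accurate.
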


\begin{proof}
We will only prove (T7) and (T8) from the definition of tense H-algebra. The rest of the properties can be checked without difficulty.

\begin{itemize}
    \item[(T7)] $g_{R}(U)\cap f_{R}(V)\subseteq g_{R}(U\cup V):$ Let $x\in g_{R}(U)\cap f_{R}(V)$. Hence, $R(x)\cap (X\setminus U)\not=\emptyset$ and $R(x)\subseteq (X\setminus V)$. Then, there exists $y\in R(x)\cap (X\setminus U)$ such that $y\notin V$. So, $R(x)\cap (X\setminus (U\cup V))\not=\emptyset$ i.e., $x\in g_{R}(U\cup V)$.
    \item[(T8)] $f_{R}(U\cap V)\subseteq f_{R}(U)\cup g_{R}(V):$ Let $x\in f_{R}(U\cap V)$ and suppose that $x\notin f_{R}(U)$. Hence, $R(x)\subseteq X\setminus (U\cap V)$ and $R(x)\not\subseteq (X\setminus U)$. Then, there exists $y\in R(x)$ such that $y\in U$. Since, $R(x)\subseteq (X\setminus U)\cup (X\setminus V)$, we have that $y\in X\setminus V$. So, $R(x)\cap (X\setminus V)\not=\emptyset$. Therefore, $x\in g_{R}(V).$
\end{itemize} 
\end{proof}

\begin{lemma}\label{lemma10} If $k : (X_1 , R_1)\longrightarrow (X_2, R_2)$ is a tense H-morphism, then the map $\Psi(k): D(X_2)\longrightarrow D(X_1)$ define by $\Psi(k)(U)=k^{-1}(U)$ is a homomorphism of tense H-algebras.
\end{lemma}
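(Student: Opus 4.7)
The plan is to reduce the claim to two ingredients that are already available: classical Esakia duality, which handles the Heyting-algebra part of the structure, and Proposition~\ref{p1}, which translates the first-order-style morphism conditions (m1)--(m5) on $k$ into the algebraic identities (M1)--(M5) satisfied by $\Psi(k)$.

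First I would recall that since $k:X_1\to X_2$ is, in particular, a Heyting morphism (as built into the definition of a tense H-function), Esakia duality immediately gives us that $\Psi(k)(U)=k^{-1}(U)$ does indeed land in $D(X_1)$ for every $U\in D(X_2)$, and that the assignment $U\mapsto k^{-1}(U)$ preserves $\wedge,\vee,\to,0,1$. Together with Lemma~\ref{lemma8}, which guarantees that the codomain $D(X_1)$ and the domain $D(X_2)$ carry genuine tense H-algebra structures with operators $g_{R_i},h_{R_i},f_{R_i},p_{R_i}$, the only remaining task is to verify that $\Psi(k)$ commutes with each of the four negative tense operators.

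Here the work is already done: by Proposition~\ref{p1}, the fact that $k$ is a tense H-morphism is equivalent to the conjunction of (M1)--(M5). In particular, for every $U\in D(X_2)$ we have
\[k^{-1}(g_{R_2}(U))=g_{R_1}(k^{-1}(U)),\quad k^{-1}(h_{R_2}(U))=h_{R_1}(k^{-1}(U)),\]
\[k^{-1}(f_{R_2}(U))=f_{R_1}(k^{-1}(U)),\quad k^{-1}(p_{R_2}(U))=p_{R_1}(k^{-1}(U)),\]
which, rewritten in terms of $\Psi(k)$, are exactly the identities $\Psi(k)(u(U))=u(\Psi(k)(U))$ for each $u\in\{g,h,f,p\}$. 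Combining these with the Heyting-homomorphism properties above yields that $\Psi(k)$ is a tense H-algebra homomorphism.

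I expect essentially no obstacle in this argument, since all technical content has been discharged: the Priestley/Esakia machinery gives the lattice and implication part, and Proposition~\ref{p1} packages the five dual conditions on $k$ into precisely the algebraic equalities needed. The only thing to be careful about is to cite (M2)--(M5) in the direction ``$k$ is a tense H-morphism $\Rightarrow$ the preimage commutes with $g_R,h_R,f_R,p_R$'', which is the easy half (Lemma~\ref{l5}) of Proposition~\ref{p1} and does not require re-invoking any compactness or Priestley separation argument.
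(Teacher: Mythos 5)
Your proof is correct and matches the paper's argument exactly: the paper also invokes Esakia duality for the Heyting part and Proposition~\ref{p1} (i.e.\ the Lemma~\ref{l5} direction giving (M2)--(M5)) for commutation with $g_R$, $h_R$, $f_R$, $p_R$. Your write-up is simply a more detailed version of the same two-step reduction.
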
 

\begin{proof} The Esakia duality for Heyting algebras allows us to state that $\Psi(k)$ is a Heyting homomorphism. Besides, taking into account that $k$ is a tense H-morphism, from Proposition \ref{p1}, we infer that $\Psi(k)$ commute with negative tense operators.
\end{proof}

\begin{definition}
Let $(\mathcal{A},N)$ be a tense H-algebra and let $R_{A}$ be the relation defined on $\mathfrak{X}(A)$ by:
\begin{equation}\label{relac}
(S,T)\in R_{A} \Longleftrightarrow f^{-1}(S)\subseteq X\setminus T\subseteq g^{-1}(S).    
\end{equation}
\end{definition}

\begin{remark}\label{remark1} Let $(\mathcal{A},N)$ be a tense H-algebra and let $S\in \mathfrak{X}(A)$. Then, it is easy to check that $R_{A}(S)=\downarrow R_{A}(S)\cap \uparrow R_{A}(S)$.
\end{remark}

\begin{lemma}\label{inversa} Let $(\mathcal{A},N)$ be a tense H-algebra and let $S,T\in \mathfrak{X}(A)$. Then, the following conditions are equivalent:
\begin{itemize}
    \item [{\rm (a)}] $f^{-1}(S)\subseteq X\setminus T\subseteq g^{-1}(S).$
    \item [{\rm (b)}] $p^{-1}(T)\subseteq X\setminus S\subseteq h^{-1}(T).$
\end{itemize}
\end{lemma}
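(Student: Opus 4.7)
The plan is to rewrite both conditions in purely pointwise form and then apply the Galois-style axioms (T5) and (T6), together with the fact that $S,T$ are filters (so closed under $\leq$ and under $\wedge$).

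More precisely, the inclusion $f^{-1}(S)\subseteq X\setminus T$ means that for every $x\in A$, if $x\in T$ then $f(x)\notin S$, while $X\setminus T\subseteq g^{-1}(S)$ means that for every $x\in A$, if $g(x)\notin S$ then $x\in T$. Dually, (b) unpacks to: for every $x\in A$, $x\in S$ implies $p(x)\notin T$, and $h(x)\notin T$ implies $x\in S$. So once the equivalence is reformulated this way, it suffices to show these two pairs of implications correspond.

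For (a) $\Rightarrow$ (b), I would first pick $x\in S$ and argue by contradiction to show $p(x)\notin T$: if $p(x)\in T$, then by the first half of (a) we get $f(p(x))\notin S$; but $x\le fp(x)$ by (T6), and $S$ is increasing, so $fp(x)\in S$, a contradiction. For the other half, suppose $h(x)\notin T$; applying the contrapositive of the second half of (a) to the element $h(x)$ gives $g(h(x))\in S$, and since $gh(x)\le x$ by (T5) and $S$ is increasing, we conclude $x\in S$.

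The converse (b) $\Rightarrow$ (a) is completely symmetric: from $x\in T$ one derives $f(x)\notin S$ using the first half of (b) applied to $f(x)$ together with $x\le pf(x)$ from (T6); from $g(x)\notin S$ one derives $x\in T$ using the contrapositive of the second half of (b) applied to $g(x)$ together with $hg(x)\le x$ from (T5). There is no real obstacle here; the only mild care needed is keeping track of which axiom — (T5) or (T6) — is invoked in each of the four implications, and making sure the contrapositives of the conditions are handled correctly. The proof is essentially a bookkeeping exercise in which (T5) is used whenever one needs to ``absorb'' a composition $gh$ or $hg$, and (T6) whenever one needs to ``expand'' an element into $pf$ or $fp$.
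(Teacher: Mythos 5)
Your proof is correct and follows essentially the same route as the paper's: both directions are obtained by unpacking the preimage inclusions pointwise and then invoking (T6) ($x\leq fp(x)$, $x\leq pf(x)$) together with the upward closure of the prime filter for the $f$/$p$ halves, and (T5) ($gh(x)\leq x$, $hg(x)\leq x$) for the $g$/$h$ halves. The paper only writes out (a)$\Rightarrow$(b) and declares the converse similar, which is exactly the symmetry you describe.
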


\begin{proof}${\rm (a)}\Rightarrow {\rm (b)}:$  Let us suppose that $p(x)\in T$ and $x\in S$. From the preceding assertion and (T6), we have that $fp(x)\in S$. Then, $p(x)\notin T$, which is a contradiction. Thus, $p^{-1}(T)\subseteq X\setminus S$. On the other hand, let $z\in X\setminus S$ and suppose  that $h(z)\notin T$. Hence, $gh(z)\in S$. From the preceding assertion and (T5), we have that $z\in S$, which is a contradiction. Therefore, $X\setminus S\subseteq h^{-1}(T)$. The converse implication is similar.
\end{proof}

\begin{remark} Lemma \ref{inversa} means that we have two ways to define the relation $R_{A}$,
either by using $f$ and $g$, or by using $p$ and $h$.
\end{remark}

\begin{lemma}\label{lemma11}  Let $(\mathcal{A},N)$ be a tense H-algebra. Then, 
\begin{itemize}
    \item [{\rm (i)}] for all $S\in \mathfrak{X}(A),$ $R_{A}(S)$ is closed in $\mathfrak{X}(A),$
    \item [{\rm (ii)}]for all $S\in \mathfrak{X}(A),$ $R^{-1}_{A}(S)$ is closed in $\mathfrak{X}(A).$
\end{itemize}
\end{lemma}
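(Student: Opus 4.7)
The plan is to express both $R_{A}(S)$ and $R_{A}^{-1}(S)$ as intersections of subbasic clopen sets of the Priestley topology on $\mathfrak{X}(A)$, and then invoke the fact that an arbitrary intersection of closed sets is closed. Recall that the subbase for this topology consists of the sets $\sigma(a)=\{P\in\mathfrak{X}(A):a\in P\}$ together with their complements $\mathfrak{X}(A)\setminus\sigma(a)=\{P\in\mathfrak{X}(A):a\notin P\}$, both of which are clopen.

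For (i), I would unfold the definition of $R_{A}$: a prime filter $T$ belongs to $R_{A}(S)$ if and only if for every $x\in A$, the implication $f(x)\in S\Rightarrow x\notin T$ holds, and the implication $x\notin T\Rightarrow g(x)\in S$ holds (equivalently, $g(x)\notin S\Rightarrow x\in T$). Translating this membership condition into the subbase, one obtains
\[
R_{A}(S)\;=\;\bigcap_{\{x\,:\,f(x)\in S\}}\bigl(\mathfrak{X}(A)\setminus\sigma(x)\bigr)\;\cap\;\bigcap_{\{x\,:\,g(x)\notin S\}}\sigma(x),
\]
which is an intersection of clopen subsets of $\mathfrak{X}(A)$ and hence closed.

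For (ii), the natural approach is to avoid quantifying over the first coordinate by using Lemma \ref{inversa}: $T\in R_{A}^{-1}(S)$ means $(T,S)\in R_{A}$, which by that lemma is equivalent to $p^{-1}(S)\subseteq A\setminus T\subseteq h^{-1}(S)$. Proceeding exactly as in (i), this yields
\[
R_{A}^{-1}(S)\;=\;\bigcap_{\{x\,:\,p(x)\in S\}}\bigl(\mathfrak{X}(A)\setminus\sigma(x)\bigr)\;\cap\;\bigcap_{\{x\,:\,h(x)\notin S\}}\sigma(x),
\]
again a closed set. The only real obstacle is the need to invoke Lemma \ref{inversa} in the second part; without that reformulation, the condition defining $R_{A}^{-1}(S)$ involves membership of $S$ in the codomain of $f^{-1}$ and $g^{-1}$, which is not directly expressible in the subbasic clopens indexed over $A$. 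Once the relation is rephrased via $p$ and $h$, both statements reduce to the same bookkeeping argument.
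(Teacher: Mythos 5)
Your proof is correct and is essentially the paper's argument in dual form: the paper establishes (i) by taking $T\notin R_{A}(S)$ and producing a clopen set ($\sigma_{A}(x)$ or its complement) that separates $T$ from $R_{A}(S)$, which is exactly the pointwise version of your identity expressing $R_{A}(S)$ as an intersection of clopen sets. For (ii), which the paper dismisses with ``similarly,'' your detour through Lemma \ref{inversa} is valid but not actually necessary: one can write $R_{A}^{-1}(S)=\bigcap_{x\in S}\bigl(\mathfrak{X}(A)\setminus\sigma_{A}(f(x))\bigr)\cap\bigcap_{x\notin S}\sigma_{A}(g(x))$ directly from the definition of $R_{A}$, so the ``obstacle'' you describe is not a genuine one.
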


\begin{proof} We will only prove {\rm (i)}. Similarly we can prove {\rm (ii)}. Suppose that $T\notin R_{A}(S)$. Then, by \ref{relac}, there exists $x\in f^{-1}(S)$ such that $x\in T,$ or there exists $y\in X\setminus T$ such that $y\notin g^{-1}(S)$. In the first case, $f(x)\in S$ and $T\in \sigma_{A}(x)$. Then taking into account that $f(x)\in S$ we infer that $\sigma_{A}(x)\cap R_{A}(S)=\emptyset$. From this assertion we deduce that $R_{A}(S)$ is closed in $\mathfrak{X}(A)$. In the second case, $T\notin \sigma_{A}(y)$ and $g(y)\notin S$. Since, $g(y)\notin S,$ we infer that $R_{A}(S)\subseteq \sigma_{A}(y)$. From this last assertion we can deduce that $R_{A}(S)$ is closed in $\mathfrak{X}(A)$ and
the proof is complete.
\end{proof}

The following lemma, can be proved using a similar technique that used in the proof of \cite[Lemma 2.14.]{ML2022}.

\begin{lemma}\label{lO} Let $(\mathcal{A},N)$ be a tense H-algebra and let $S,T\in \mathfrak{X}(A)$. Then, 

\begin{itemize}
    \item [{\rm (i)}] $X\setminus T\subseteq g^{-1}(S)\Longleftrightarrow (S,T)\in (R_{A}\circ \subseteq),$
    \item [{\rm (ii)}] $X\setminus S\subseteq h^{-1}(T)\Longleftrightarrow (S,T)\in (\supseteq\circ R_{A}),$
    \item [{\rm (iii)}] $f^{-1}(S)\subseteq X\setminus T\Longleftrightarrow (S,T)\in (\subseteq \circ R_{A}),$
    \item [{\rm (iv)}] $p^{-1}(T)\subseteq X\setminus S\Longleftrightarrow (S,T)\in (R_{A} \circ \supseteq).$
\end{itemize}
\end{lemma}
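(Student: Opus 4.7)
The four statements are structurally parallel, so I would prove \textbf{(i)} in full and handle the rest by analogous arguments with the operators swapped and the filter/ideal roles exchanged.

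The reverse implication in \textbf{(i)} is a direct inclusion chase. If $(S,T) \in R_A \circ \subseteq$, there is a prime filter $U$ with $(S,U) \in R_A$ and $U \subseteq T$. The definition of $R_A$ gives $X \setminus U \subseteq g^{-1}(S)$, and $U \subseteq T$ yields $X \setminus T \subseteq X \setminus U$; chaining these two inclusions produces $X \setminus T \subseteq g^{-1}(S)$, as required.

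For the forward implication, assume $X \setminus T \subseteq g^{-1}(S)$. I would construct a prime filter $U$ satisfying $X \setminus g^{-1}(S) \subseteq U \subseteq X \setminus f^{-1}(S)$ and $U \subseteq T$ (the first two conditions being precisely $(S,U) \in R_A$). By Lemma \ref{lemma2}, the set $F := X \setminus g^{-1}(S)$ is a filter and $f^{-1}(S)$ is an ideal; moreover, since $T$ is a prime filter, $X \setminus T$ is also an ideal. Let $I$ be the ideal generated by $f^{-1}(S) \cup (X \setminus T)$. The existence of the desired $U$ follows from the Birkhoff--Stone prime filter theorem once we verify $F \cap I = \emptyset$: such a $U$ will then contain $F$ and be disjoint from both $f^{-1}(S)$ and $X \setminus T$, giving the three required inclusions.

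The crux of the whole argument is exactly this disjointness. Suppose for contradiction that there exist $a \in f^{-1}(S)$, $b \in X \setminus T$ and $c \in F$ with $c \leq a \vee b$. Then $f(a) \in S$, $g(c) \notin S$, and by the hypothesis $X \setminus T \subseteq g^{-1}(S)$ also $g(b) \in S$. Axiom (T7) gives $f(a) \wedge g(b) \leq g(a \vee b)$, and the antitonicity of $g$ from (T9), applied to $c \leq a \vee b$, yields $g(a \vee b) \leq g(c)$. Since $S$ is a filter containing both $f(a)$ and $g(b)$, it contains their meet and therefore $g(c) \in S$, contradicting $g(c) \notin S$.

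For \textbf{(ii)} the same template runs with $h,p$ in place of $g,f$, still using (T7). For \textbf{(iii)} and \textbf{(iv)} the filter--ideal pair changes (e.g.\ for \textbf{(iii)} one builds $U$ containing $S \cup (X \setminus h^{-1}(T))$ and disjoint from $p^{-1}(T)$), and axiom (T8) replaces (T7) in the disjointness computation, together with the corresponding antitonicity (T10). One also needs the one-directional strengthening of Lemma \ref{inversa} (obtained directly from (T5)/(T6), exactly as in the proof of that lemma), namely that $f^{-1}(S) \subseteq X \setminus T$ already forces $p^{-1}(T) \subseteq X \setminus S$, and symmetrically for the $g,h$ pair; this supplies the extra hypothesis that closes off one of the two case-analysis branches. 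I expect the only real obstacle to be bookkeeping: in each of the four items, picking the correct filter--ideal pair and matching the right (T7)/(T8) inequality with the right monotonicity axiom so that the contradiction argument actually closes.
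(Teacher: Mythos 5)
Your proof is correct and is exactly the standard argument the paper itself omits (it only refers the reader to \cite[Lemma 2.14]{ML2022}): separate the filter $X\setminus g^{-1}(S)$ (resp.\ the filter generated by $S\cup(X\setminus h^{-1}(T))$, etc.) from the ideal generated by the relevant pair of ideals via Birkhoff--Stone, with disjointness secured by (T7)/(T8) together with the appropriate antitonicity axiom. Your identification of the two genuinely delicate points --- that (iii)/(iv) need the conditions rewritten via Lemma \ref{inversa} in terms of $p,h$ so that they constrain $U$ non-circularly, and that only the one-directional halves of that lemma's proof (via (T5)/(T6)) are available as hypotheses --- is accurate, and all four disjointness computations close as you describe.
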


In what follows, if $(\mathcal{A},N)$ is a tense $H$-algebra, for $a\in A$  we will denote by $[a)$ ($(a]$) the filter  (ideal) generated by $\{a\}$.

\begin{lemma}\label{2.10} Let $(\mathcal{A},N)$ be a tense H-algebra and let $S\in \mathfrak{X}(A)$ and $a\in A$. Then,

\begin{itemize}
\item [{\rm (i)}] $g(a)\notin S$ iff  for all $T\in \mathfrak{X}(A)((S,T)\in R_{A}\Rightarrow a\in T),$
\item [{\rm (ii)}] $h(a)\notin S$ iff  if for all $T\in \mathfrak{X}(A)((T,S)\in R_{A}\Rightarrow a\in T),$
\item [{\rm (iii)}] $f(a)\in S$ iff  for all $T\in \mathfrak{X}(A)((S,T)\in R_{A}\Rightarrow a\notin T),$
\item [{\rm (iv)}] $p(a)\in S$ iff  for all $T\in \mathfrak{X}(A)((T,S)\in R_{A}\Rightarrow a\notin T).$
\end{itemize}
\end{lemma}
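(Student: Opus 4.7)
The plan is to prove the four equivalences by the standard prime-filter construction, exploiting Lemma \ref{lemma2} (which says $f^{-1}(S)$, $p^{-1}(S)$ are ideals and $X\setminus g^{-1}(S)$, $X\setminus h^{-1}(S)$ are filters whenever $S$ is a prime filter) together with axioms (T7) and (T8). Items (i) and (iii) will be handled first, and then (ii) and (iv) will follow by the symmetric argument, using Lemma \ref{inversa} so that $(S,T)\in R_A$ can equivalently be phrased via $p$ and $h$.

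For the easy directions, assume $(S,T)\in R_A$, i.e.\ $f^{-1}(S)\subseteq X\setminus T\subseteq g^{-1}(S)$. If $g(a)\notin S$ then $a\notin g^{-1}(S)$, hence $a\notin X\setminus T$, i.e.\ $a\in T$; this gives the forward implication of (i). Similarly, $f(a)\in S$ means $a\in f^{-1}(S)\subseteq X\setminus T$, so $a\notin T$, giving the forward implication of (iii). The analogous forward implications of (ii) and (iv) use the $(p,h)$-version of the definition of $R_A$ from Lemma \ref{inversa}.

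The main work is in the converses, where one must manufacture a prime filter $T\in\mathfrak{X}(A)$ with the prescribed membership of $a$ and with $(S,T)\in R_A$. For the converse of (i), suppose $g(a)\in S$. Set $F:=X\setminus g^{-1}(S)$ (a filter by Lemma \ref{lemma2}) and $I$ the ideal generated by $f^{-1}(S)\cup\{a\}$. I will show $F\cap I=\emptyset$: an element of $I$ lies below $a\vee c$ for some $c\in f^{-1}(S)$, and by (T7), $g(a)\wedge f(c)\leq g(a\vee c)$; since $g(a),f(c)\in S$, we get $g(a\vee c)\in S$, so $a\vee c\notin F$, and hence $F$ avoids $I$. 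The Prime Filter Theorem then yields a prime filter $T\supseteq F$ with $T\cap I=\emptyset$, which means $a\notin T$, $f^{-1}(S)\subseteq X\setminus T$, and $X\setminus T\subseteq g^{-1}(S)$; thus $(S,T)\in R_A$ and $a\notin T$, as required. For the converse of (iii), suppose $f(a)\notin S$; one dually constructs a prime filter $T$ containing both $a$ and $F=X\setminus g^{-1}(S)$ yet disjoint from the ideal $f^{-1}(S)$. Disjointness of the relevant filter and ideal reduces, via (T10) and (T8), to the inequality $f(a\wedge c)\leq f(a)\vee g(c)$: if some element of $f^{-1}(S)$ were above $a\wedge c$ with $c\in F$, then $f(a\wedge c)\in S$, so primality of $S$ combined with (T8) would force $f(a)\in S$ or $g(c)\in S$, both impossible.

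The main obstacle is setting up, for each of the converses, the correct pairing of a generated filter against a generated ideal so that the Prime Filter Theorem applies; axioms (T7) and (T8) are exactly what is needed to certify disjointness, and this is where the asymmetry between the $g/f$ pair and the $h/p$ pair requires care. Once items (i) and (iii) are established, items (ii) and (iv) follow by the same template after swapping the roles of $(S,T)$ via Lemma \ref{inversa}, using $h$ in place of $g$ and $p$ in place of $f$; no further algebraic input is needed beyond (T7) and (T8) as applied in their ``dual'' form for $h$ and $p$.
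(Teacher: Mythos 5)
Your proof is correct, but it takes a genuinely different route from the paper's. The paper separates a \emph{simple} filter/ideal pair — e.g.\ for the converse of (i) it separates the principal ideal $(a]$ from the filter $X\setminus g^{-1}(S)$, which needs only (T9) — and thereby obtains a prime filter satisfying just \emph{one} of the two inclusions defining $R_A$; it then invokes Lemma \ref{lO} to convert that one-sided inclusion into membership in $R_A\circ\subseteq$ (resp.\ $\subseteq\circ R_A$), and finally uses monotonicity of $a$'s membership along $\subseteq$ to land on an actual $R_A$-successor. You instead build the witnessing prime filter in one shot, enlarging the filter or the ideal so as to encode \emph{both} halves of the definition of $R_A$ at once (e.g.\ the ideal generated by $f^{-1}(S)\cup\{a\}$ against $X\setminus g^{-1}(S)$), and you pay for this with the disjointness certificates coming from (T7) and (T8) together with primality of $S$. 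What your approach buys is self-containedness: Lemma \ref{lO} is only cited in the paper (its proof is deferred to the literature), whereas your argument re-derives exactly the content that lemma would supply, making explicit where (T7) and (T8) enter. What the paper's approach buys is modularity: the combinatorial core is isolated in Lemma \ref{lO}, and the proof of Lemma \ref{2.10} itself reduces to two very short separation arguments. Your handling of the easy directions and of the reduction of (ii) and (iv) to (i) and (iii) via Lemma \ref{inversa} matches the paper's intent, and the individual steps (the shape of elements of the generated ideal/filter, the use of upward closure of $F$ and of primality of $S$ in the (iii) case) all check out.
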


\begin{proof} We will only prove (i) and (iii).

\noindent (i)
$(\Rightarrow):$ Suppose that $g(a)\notin S$ and $(S,T)\in R_{A}$. Then, $f^{-1}(S)\subseteq X\setminus T\subseteq g^{-1}(S)$ and so $a\in T$.

\noindent $(\Leftarrow):$ Suppose that $g(a)\in S$. Then, $a\notin X\setminus g^{-1}(S)$. By Lemma \ref{lemma2}(b), $X\setminus g^{-1}(S)$ is a filter. Then, by (T9) we have that $(a]\cap (X\setminus g^{-1}(S))=\emptyset$. So, by the Birkhoff--Stone theorem, there exists $T\in \frak{X}(A)$ such that $X\setminus T\subseteq g^{-1}(S)$ and $a\notin T$. Hence, by Lemma  \ref{lO}, $(S,T)\in (R_{A}\circ \subseteq )$. Then, $(S,Q)\in R_{A}$ and $Q\subseteq T$ for some $Q\in \frak{X}(A)$. Therefore, $a\notin Q$.

\noindent (iii): $(\Rightarrow):$ Suppose that $f(a)\in S$ and $(S,T)\in R_{A}$. Then, $f^{-1}(S)\subseteq X\setminus T\subseteq g^{-1}(S)$ and so $a\notin T.$

\noindent $(\Leftarrow):$ Suppose that $f(a)\notin S$. So, $a\notin f^{-1}(S)$. By Lemma \ref{lemma2} (a), $f^{-1}(S)$ is an ideal. Then, by (T10) we obtain $[a)\cap f^{-1}(S)=\emptyset$. Hence, by the  Birkhoff--Stone theorem, there exists $Q\in \frak{X}(A)$ such that $f^{-1}(S)\subseteq X\setminus Q$ and $a\in Q$. By Lemma \ref{lO}, $(S,Q)\in (R_{A}\circ \supseteq)$. Then, $(S,T)\in R_{A}$ and $Q\subseteq T$ for some $T\in \frak{X}(A)$. Therefore,  $a\in T$. 
\end{proof}

\begin{lemma}\label{l13} Let $(\mathcal{A},N)$ be a tense H-algebra. Then, $$\Phi(A)=(\mathfrak{X}(A),\subseteq,R_{A})$$ is a tense H-space and $\sigma_{A}:A\longrightarrow D(\mathfrak{X}(A))$ is a tense H-isomorphism.
\end{lemma}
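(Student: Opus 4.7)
The plan is to verify in turn the three tense H-space axioms (S1)--(S3) for $\Phi(A)=(\mathfrak{X}(A),\subseteq,R_A)$ and then show that the Esakia isomorphism $\sigma_A$ intertwines the algebraic tense operators with the relational ones $g_{R_A}, h_{R_A}, f_{R_A}, p_{R_A}$. The underlying space $\mathfrak{X}(A)$ is already a Heyting space by Esakia duality, so I can concentrate on the relational content.

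First, (S1) is precisely Lemma \ref{lemma11}: both $R_A(S)$ and $R_A^{-1}(S)$ are closed in $\mathfrak{X}(A)$. Second, (S2) is just Remark \ref{remark1}, stating $R_A(S)=\downarrow R_A(S)\cap \uparrow R_A(S)$ for every $S\in \mathfrak{X}(A)$. So the only nontrivial task is (S3), and this is where I would place the bulk of the argument.

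For (S3), I would prove the four key identities
\begin{equation*}
\sigma_A(g(a))=g_{R_A}(\sigma_A(a)),\quad \sigma_A(h(a))=h_{R_A}(\sigma_A(a)),
\end{equation*}
\begin{equation*}
\sigma_A(f(a))=f_{R_A}(\sigma_A(a)),\quad \sigma_A(p(a))=p_{R_A}(\sigma_A(a)),
\end{equation*}
for every $a\in A$. Each of these is a direct reformulation of the corresponding item of Lemma \ref{2.10}. For example, item (i) of that lemma says $g(a)\notin S$ iff every $T\in \mathfrak{X}(A)$ with $(S,T)\in R_A$ satisfies $a\in T$, which by definition of $g_{R_A}$ is equivalent to $S\notin g_{R_A}(\sigma_A(a))$. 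The arguments for $h$, $f$, $p$ are the analogous translations of items (ii)--(iv). Since Esakia duality tells us that every $U\in D(\mathfrak{X}(A))$ is of the form $\sigma_A(a)$ for some $a\in A$, these identities force $g_{R_A}(U), h_{R_A}(U), f_{R_A}(U), p_{R_A}(U)\in D(\mathfrak{X}(A))$, establishing (S3).

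Finally, for the isomorphism claim, Esakia duality already gives that $\sigma_A\colon A\to D(\mathfrak{X}(A))$ is a Heyting algebra isomorphism, and the four identities just verified show that $\sigma_A$ commutes with each of $g,h,f,p$, so it is an isomorphism of tense H-algebras. The main obstacle is really bookkeeping rather than mathematical: making sure the equivalence in Lemma \ref{2.10} is read in the right direction for each of the four operators (note the sign flip for $f,p$, whose relational counterparts involve $R(x)\subseteq X\setminus U$ rather than $R(x)\cap (X\setminus U)\neq \emptyset$). Once this is done carefully, all three axioms and the isomorphism statement fall out at once.
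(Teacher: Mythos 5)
Your proposal is correct and follows essentially the same route as the paper: (S1) from Lemma \ref{lemma11}, (S2) from Remark \ref{remark1}, and (S3) together with the isomorphism claim from the four identities $\sigma_A(u(a))=u_{R_A}(\sigma_A(a))$ for $u\in N$, which the paper likewise derives from Lemma \ref{2.10} and the surjectivity of $\sigma_A$ given by Esakia duality. The only difference is presentational: the paper writes out the verification of the $g$ and $f$ identities in detail rather than observing, as you do, that each is a direct contrapositive reading of the corresponding item of Lemma \ref{2.10}.
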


\begin{proof} From the duality for Heyting algebras, we have that $(\frak{X}(A),\subseteq)$ is a
Heyting space. First, we will prove that the following assertions hold for all $a\in A:$

\begin{equation}\label{gh}
g_{R_A}(\sigma_{A}(a))=\sigma_{A}(g(a));\,\,\, h_{R_A}(\sigma_{A}(a))=\sigma_{A}(h(a));     
\end{equation}
\begin{equation}\label{fp}
f_{R_A}(\sigma_{A}(a))=\sigma_{A}(f(a));\,\,\, p_{R_A}(\sigma_{A}(a))=\sigma_{A}(p(a)).    
\end{equation}

\noindent $\bullet$ $g_{R_A}(\sigma_{A}(a))=\sigma_{A}(g(a)):$  Suppose that $S\in g_{R_A}(\sigma(a))$. From this  there exists $T\in \frak{X}(A)$ such that $(S,T)\in R_{A}$ and $a\in T$. Since, $f^{-1}(S)\subseteq X\setminus T\subseteq g^{-1}(S),$ we have that $f(a)\in S,$ that is, $S\in \sigma_{A}(g(a))$ and therefore $g_{R_A}(\sigma_{A}(a))\subseteq \sigma_{A}(g(a)).$ On the other hand, suppose that $S\in \sigma_{A}(g(a))$, that is, $g(a)\in S$. Then, by Lemma \ref{2.10}, there exists $T\in \frak{X}{(A)}$ such that $(S,T)\in R_{A}$ and $a\notin T$. From which it follows that $T\in R_{A}(S)\cap (X\setminus \sigma_{A}(a))$. Therefore, $S\in g_{R_A}(\sigma_{A}(a)),$ from which we conclude $\sigma_{A}(g(a))\subseteq g_{R_A}(\sigma_{A}(a))$. 

\noindent $\bullet$ $f_{R_A}(\sigma_{A}(a))=\sigma_{A}(f(a)):$ Let us take a prime filter $S$ such that $f(a)\notin S$. By Lemma \ref{2.10}, there exists $T\in \frak{X}(A)$ such that $(S,T)\in R_{A}$ and $a\in T$. Then, $R_{A}^{-1}(S)\not\subseteq X\setminus \sigma_{A}(a)$. So, $S\notin f_{R}(\sigma_{A}(a))$ and, therefore,  $f_{R_A}(\sigma_{A}(a))\subseteq \sigma_{A}(f(a))$. Moreover, it is inmediate that $\sigma_{A}(f(a))\subseteq f_{R_A}(\sigma_{A}(a)).$

In a similar way we can prove that 

\begin{center}
$h_{R_A}(\sigma_{A}(a))=\sigma_{A}(h(a))$ and $p_{R_A}(\sigma_{A}(a))=\sigma_{A}(p(a))$. 
\end{center}

\noindent (S1): By Lemma \ref{lemma11}, $R_{A}(S)$ and $R_{A}^{-1}(S)$ are closed subsets of $\frak{X}(A)$ for all $S\in \frak{X}(A)$.

\noindent (S2): By Remarks \ref{remark1}, for any $S\in \frak{X}(A),$ $R_{A}(S)=\downarrow R_{A}(S)\cap \uparrow R_{A}(S).$

\noindent (S3): For any $U\in D(\frak{X}(A))$ there exists $a\in A$ such that $U=\sigma_{A}(a)$. Then, the equalities \ref{gh} and \ref{fp} allows us to affirm that \begin{center}$g_{R}(U), h_{R}(U), f_{R}(U), p_{R}(U)\in D(\frak{X}(A)).$\end{center}  So, $(\frak{X}(A),R_{A})$ is a tense $H$-space. By Lemma \ref{lemma8} we have that $D(\frak{X}(A))$ is a tense H-algebra. By virtue of the results established in \cite{Esakia} and the assertions \ref{gh} and \ref{fp} we conclude that $\sigma_{A}$ is a tense H-isomorphism.
\end{proof}

\begin{lemma}\label{l14} Let $(A_1,N1)$ and $(A_2,N_2)$ be two tense H-algebras and $k:A_{1}\longrightarrow A_{2}$. Then, the application $\Phi(k):\mathfrak{X}(A_2)\longrightarrow \mathfrak{X}(A_1),$ defined by $\Phi(k)(S)=k^{-1}(S)$ for all $S\in \mathfrak{X}(A_2),$ is a tense H-function.
\end{lemma}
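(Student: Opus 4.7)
The plan is to verify that $\Phi(k)$ is a tense H-function by checking conditions (M1)--(M5) of Proposition \ref{p1}, from which the equivalent conditions (m1)--(m5) will follow. First, by the Esakia duality for Heyting algebras, since $k\colon A_1\to A_2$ is in particular a Heyting homomorphism, the map $\Phi(k)$ is already a Heyting morphism between the underlying Heyting spaces. Hence the only remaining task is to check the five conditions concerning the relation $R_{A_i}$ and the operators $g_{R_{A_i}},h_{R_{A_i}},f_{R_{A_i}},p_{R_{A_i}}$.

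For (M1), I will use the characterization (\ref{relac}) of $R_{A_i}$: assume $(S,T)\in R_{A_2}$, so $f^{-1}(S)\subseteq A_2\setminus T\subseteq g^{-1}(S)$. To show $(k^{-1}(S),k^{-1}(T))\in R_{A_1}$, pick $a\in f^{-1}(k^{-1}(S))$. Then $k(f(a))\in S$, and since $k$ is a tense H-homomorphism we have $k(f(a))=f(k(a))$, whence $k(a)\in f^{-1}(S)\subseteq A_2\setminus T$, so $a\in A_1\setminus k^{-1}(T)$. The dual inclusion $A_1\setminus k^{-1}(T)\subseteq g^{-1}(k^{-1}(S))$ follows by the same argument using that $k$ commutes with $g$. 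This gives (M1).

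For (M2)--(M5), the key remark is that every $U\in D(\mathfrak{X}(A_1))$ has the form $U=\sigma_{A_1}(a)$ for some $a\in A_1$ (Esakia duality), so by the equalities (\ref{gh}) and (\ref{fp}) proved inside Lemma \ref{l13}, one has $g_{R_{A_1}}(U)=\sigma_{A_1}(g(a))$, and similarly for $h_{R_{A_1}}, f_{R_{A_1}}, p_{R_{A_1}}$. A direct unwinding of the definition gives $\Phi(k)^{-1}(\sigma_{A_1}(b))=\sigma_{A_2}(k(b))$ for every $b\in A_1$. Combining these two facts with the fact that $k$ commutes with each negative tense operator, one obtains, for instance,
\[
\Phi(k)^{-1}\bigl(g_{R_{A_1}}(\sigma_{A_1}(a))\bigr)=\sigma_{A_2}(k(g(a)))=\sigma_{A_2}(g(k(a)))=g_{R_{A_2}}\bigl(\Phi(k)^{-1}(\sigma_{A_1}(a))\bigr),
\]
which is (M2). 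The verifications of (M3), (M4), (M5) are entirely analogous, substituting $h$, $f$, $p$ respectively.

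I do not expect substantial obstacles here: once Lemma \ref{l13} is in place, conditions (M2)--(M5) are formal consequences of the commutativity of $k$ with the negative tense operators together with the naturality identity $\Phi(k)^{-1}\circ\sigma_{A_1}=\sigma_{A_2}\circ k$. The only spot requiring a small amount of care is (M1), where one must invoke both halves of the characterization (\ref{relac}); by Lemma \ref{inversa} one could equivalently argue with $h$ and $p$, but using $f$ and $g$ directly is the most transparent route. Finally, invoking Proposition \ref{p1} closes the argument.
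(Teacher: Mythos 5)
Your proposal is correct and follows essentially the same route as the paper: Esakia duality gives that $\Phi(k)$ is a Heyting morphism, (M1) is checked directly from the characterization (\ref{relac}) using that $k$ commutes with $f$ and $g$, and (M2)--(M5) follow from the identities (\ref{gh}), (\ref{fp}), the naturality $\Phi(k)^{-1}\circ\sigma_{A_1}=\sigma_{A_2}\circ k$, and the commutativity of $k$ with the negative tense operators, with Proposition \ref{p1} closing the argument. No substantive differences from the paper's proof.
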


\begin{proof} From the duality for Heyting algebras, it holds that the application $\Phi(k):\frak{X}(A_2)\longrightarrow \frak{X}(A_1)$ is a morphism of
Heyting algebras. In what follows, we will prove that (M1) and (M2) hold.

\noindent {\rm (M1)}: Let $S,T\in \frak{X}(A_2)$. Let us prove that if $(S,T)\in R_{A_2},$ then $f^{-1}(k^{-1}(S))$ $\subseteq X\setminus k^{-1}(T)\subseteq g^{-1}(k^{-1}(S))$. Suppose that $a\in f^{-1}(k^{-1}(S))$. Then we have that $k(f(a))=f(k(a))\in S,$ from which it follows that $k(a)\notin T,$ i.e., $a\notin k^{-1}(T)$. In a similar way we can prove $X\setminus k^{-1}(T)\subseteq g^{-1}(k^{-1}(S)).$

\noindent {\rm (M2)}: Let $U\in D(\frak{X}(A_1))$. Since, $\sigma_{A_1}:A_{1}\longrightarrow D(\frak{X}(A_1))$ is an isomorphism of tense H-algebras we have that $U=\sigma_{A_1}(a)$ for some $a\in A_{1}$. Besides, for any $a\in A_{1},$ we have that $\Phi(k)^{-1}(\sigma_{A_1}(a))=\sigma_{A_2}(k(a)).$ From this last statement and the fact that $k$ is a homomorphism of tense H-algebras we obtain that $g_{R_{A_2}}(\Phi(k)^{-1}(\sigma_{A}(a)))=g_{R_{A_2}}(\sigma_{A_2}(k(a)))=\sigma_{A_2}(g_2(k(a)))=\sigma_{A_2}(k(g_{1}(a)))=\Phi(k)^{-1}(\sigma_{A_1}(g(a)))=\Phi(k)^{-1}(g_{R_{A_1}}(\sigma_{A_1}(a))).$ Similarly, the axioms (M3), (M4) and (M5) can be proved.

\end{proof}

Lemmas \ref{l13} and \ref{l14} show that $\Phi$ is a contravariant functor from {\bf tHA} to {\bf tHS}.

The following characterization of isomorphisms in the category {\bf tHS} will be used to determine the duality we were looking for.

\begin{proposition} Let $(X_1,R_1)$ and $(X_2,R_2)$ be two tense H-spaces. Then, for every function $k:X_{1}\longrightarrow X_2$ the following conditions are equivalents:

\begin{itemize}
    \item [{\rm (i)}] $k$ is an isomorphism in the category {\bf tHS},
    \item [{\rm (ii)}] $k$ is a bijective H-morphism such that for all $x,y\in X_{1}:$
    \item [{\rm (m)}] $(x,y)\in R_{1}\Longleftrightarrow (k(x),k(y))\in R_{2}.$
\end{itemize}
\end{proposition}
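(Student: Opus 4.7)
The plan is to handle the two directions separately, relying on Lemma \ref{l5} (which gives (M1) for tense H-morphisms) for the forward direction, and on the bijectivity together with the biconditional (m) for the backward direction. The core observation is that once $k$ is bijective, each of the existence conditions (m2)--(m5) in the definition of tense H-morphism collapses: the witness $z$ is forced to be $k^{-1}$ of the target element.

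For (i) $\Rightarrow$ (ii), I would start by noting that an isomorphism in \textbf{tHS} is, in particular, a bijective Heyting morphism (any isomorphism must be bijective on underlying sets, since one can compose $k$ with its tHS-inverse). The forward implication $(x,y)\in R_1 \Rightarrow (k(x),k(y))\in R_2$ is precisely (M1), given by Lemma \ref{l5} applied to $k$. For the converse implication, I apply the same lemma to the inverse tense H-morphism $k^{-1}\colon X_2 \to X_1$: if $(k(x),k(y))\in R_2$, then (M1) for $k^{-1}$ yields $(k^{-1}(k(x)),k^{-1}(k(y))) = (x,y) \in R_1$.

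For (ii) $\Rightarrow$ (i), I first recall that a bijective Heyting morphism between Heyting spaces is automatically a Heyting isomorphism (its set-theoretic inverse is continuous and order-preserving by compactness/Hausdorffness of Priestley spaces), so $k^{-1}$ is itself a Heyting morphism. Next I verify that $k$ is a tense H-morphism by checking (m1)--(m5). Condition (m1) is the forward direction of (m). For (m2), given $(k(x),y)\in R_2$, take $z := k^{-1}(y)$; then $k(z)=y\leq y$, and the converse direction of (m) applied to the pair $(k(x),k(z))=(k(x),y)\in R_2$ yields $(x,z)\in R_1$, as required. Conditions (m3), (m4) and (m5) follow by the same choice $z=k^{-1}(y)$, combined with the corresponding instance of (m); note that in (m4) and (m5) the inequality $y\leq k(z)$ holds as equality.

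Finally, to conclude that $k$ is an isomorphism in \textbf{tHS}, I must check that $k^{-1}$ is also a tense H-morphism. This is symmetric: condition (m) is manifestly symmetric in the roles of $k$ and $k^{-1}$ (rewriting it as $(x',y')\in R_2 \Leftrightarrow (k^{-1}(x'),k^{-1}(y'))\in R_1$), so the same argument applied to $k^{-1}$ gives the remaining conditions. I do not expect any genuine obstacle: the only subtle point is ensuring that the order-theoretic inequalities in (m2)--(m5) are satisfied with equality via the choice $z=k^{-1}(y)$, which turns all four ``existence of $z$'' clauses into immediate consequences of the biconditional (m).
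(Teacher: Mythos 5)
Your proof is correct, and it takes the natural route: the paper itself dismisses this proposition with ``It is routine,'' so your argument simply supplies the details the authors omit. The key points you identify --- that bijectivity collapses the existential clauses (m2)--(m5) to the single biconditional (m) via the witness $z=k^{-1}(y)$, and that a bijective Heyting morphism is automatically a Heyting isomorphism --- are exactly what makes the statement routine, and your handling of both directions is sound.
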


\begin{proof} It is routine.
\end{proof}

The application $\varepsilon_{X}:X\longrightarrow \frak{X}(D(X))$ leads to another characterization of tense H-space, which also allows us to assert that this application is an isomorphism in the category {\bf tHS}, as we will describe below:

\begin{lemma} Let $(X,R)$ be a tense H-space and let $R_{D(X)}$ be the relation defined on $\frak{X}(D(X))$ by means of the operators $f_{R}$ and $g_{R}$ as follows:

\begin{equation}
(\varepsilon_{X}(x),\varepsilon_{X}(y))\in R_{D(X)}\Longleftrightarrow f_{R}^{-1}(\varepsilon_{X}(x))\subseteq \frak{X}(D(X))\setminus \varepsilon_{X}(y)\subseteq g_{R}^{-1}(\varepsilon_{X}(x)).
\end{equation}
Then, the following property holds:

\begin{itemize}
    \item [{\rm (S4)}] $(x,y)\in R$ implies $(\varepsilon_{X}(x),\varepsilon_{X}(y))\in R_{D(X)}.$
\end{itemize}
\end{lemma}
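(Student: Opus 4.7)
The plan is to verify property (S4) by directly unpacking the definition of $R_{D(X)}$. Recall that $R_{D(X)}$ is the instance of the algebraic relation (\ref{relac}) associated to the tense H-algebra $D(X)$, whose negative tense operators are $g_R, h_R, f_R, p_R$. Since $\varepsilon_X(x) = \{U \in D(X) \colon x \in U\}$ and $\varepsilon_X(y)$ are prime filters of $D(X)$, the condition
\[ (\varepsilon_X(x),\varepsilon_X(y)) \in R_{D(X)} \]
amounts to the two inclusions (a) $f_R^{-1}(\varepsilon_X(x)) \subseteq D(X)\setminus \varepsilon_X(y)$ and (b) $D(X)\setminus \varepsilon_X(y) \subseteq g_R^{-1}(\varepsilon_X(x))$, where one reads $f_R^{-1}(\varepsilon_X(x))=\{U\in D(X) \colon x\in f_R(U)\}$ and analogously for $g_R^{-1}(\varepsilon_X(x))$.

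For inclusion (a), I would take $U\in f_R^{-1}(\varepsilon_X(x))$, so $x\in f_R(U)$. By (\ref{f}) this means $R(x)\subseteq X\setminus U$. The hypothesis $(x,y)\in R$ gives $y\in R(x)$, hence $y\notin U$, i.e., $U\notin \varepsilon_X(y)$. For inclusion (b), I would take $U\in D(X)\setminus \varepsilon_X(y)$, so $y\notin U$. Combined with $(x,y)\in R$, this yields $y \in R(x)\cap(X\setminus U)$, so $R(x)\cap(X\setminus U)\neq\emptyset$, and by (\ref{g}) we conclude $x\in g_R(U)$, i.e., $U\in g_R^{-1}(\varepsilon_X(x))$.

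There is no genuine obstacle here: once one disentangles the layered notation (that $R_{D(X)}$ is the algebraic relation on prime filters of the algebra $D(X)$, built from its operators $f_R$ and $g_R$), each inclusion reduces to a one-line application of the definitions (\ref{g}) and (\ref{f}) together with the single fact that $y$ is a witness in $R(x)$. The only care required is to keep straight that $f_R^{-1}(\varepsilon_X(x))$ is a subset of $D(X)$ (not of $\mathfrak{X}(D(X))$), matching the convention used in (\ref{relac}).
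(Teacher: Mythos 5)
Your proof is correct and takes essentially the same route as the paper's: both inclusions are checked by unpacking the definitions (\ref{f}) and (\ref{g}) of $f_R$ and $g_R$ and using $y\in R(x)$ as the witness. Your remark that $f_{R}^{-1}(\varepsilon_{X}(x))$ and $g_{R}^{-1}(\varepsilon_{X}(x))$ live in $D(X)$ rather than $\mathfrak{X}(D(X))$ is well taken and in fact tidies up a notational slip present in the paper's own statement and proof.
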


\begin{proof} Let us consider $x,y\in X$ such that $(x,y)\in R$ and prove that $$f_{R}^{-1}(\varepsilon_{X}(x))\subseteq \frak{X}(D(X))\setminus \varepsilon_{X}(y)\subseteq g_{R}^{-1}(\varepsilon_{X}(x)).$$ Let $U\in D(X)$ such that $f_{R}(U)\in \varepsilon_{X}(x)$. Then, $R(x)\subseteq X\setminus U$. This last assertion allows us to infer that $X\setminus U\in \varepsilon_{X}(y)$. Therefore, $f^{-1}(\varepsilon_{X}(x))\subseteq \frak{X}(D(X))\setminus \varepsilon_{X}(y)$. On the other hand, let us consider $V\in \frak{X}(D(X))\setminus \varepsilon_{X}(y)$. Then, $y\in R(x)\cap (\frak{X}(D(X))\setminus \varepsilon_{X}(y))$. So, $x\in g_{R}(V),$ i.e., $g_{R}(V)\in \varepsilon_{X}(x)$. Therefore, we have that $\frak{X}(D(X))\setminus \varepsilon_{X}(y)\subseteq g_{R}^{-1}(\varepsilon_{X}(x))$.
\end{proof}

\begin{proposition}\label{propo3} Let $(X , R)$ be a tense $H$-space. Then the map $\varepsilon_{X}:X\longrightarrow \frak{X}(D(X))$ is an isomorphism in the category {\bf tHS}.
\end{proposition}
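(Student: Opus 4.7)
The plan is to apply the characterization of isomorphisms in $\mathbf{tHS}$ just established: I need to verify that $\varepsilon_X$ is a bijective Heyting morphism and that the biconditional (m) holds, namely $(x,y)\in R$ if and only if $(\varepsilon_X(x),\varepsilon_X(y))\in R_{D(X)}$. The first part, that $\varepsilon_X$ is a bijective Heyting space isomorphism, is already provided by classical Esakia duality and requires no new work. The forward implication of (m) is precisely condition (S4), which was verified in the preceding lemma. Thus the only substantive task is the reverse implication.

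For the reverse implication I would argue by contrapositive: assume $(x,y)\notin R$ and show $(\varepsilon_X(x),\varepsilon_X(y))\notin R_{D(X)}$. The key tool here is Lemma \ref{lt}, which gives two alternatives: either there exists $U\in D(X)$ with $x\in f_R(U)$ and $y\in U$, or there exists $V\in D(X)$ with $y\notin V$ and $x\notin g_R(V)$. In the first case, $U\in f_R^{-1}(\varepsilon_X(x))\cap \varepsilon_X(y)$, so $f_R^{-1}(\varepsilon_X(x))\not\subseteq \mathfrak{X}(D(X))\setminus \varepsilon_X(y)$, which breaks the left inclusion defining $R_{D(X)}$. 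In the second case, $V\in \mathfrak{X}(D(X))\setminus\varepsilon_X(y)$ but $V\notin g_R^{-1}(\varepsilon_X(x))$, so $\mathfrak{X}(D(X))\setminus \varepsilon_X(y)\not\subseteq g_R^{-1}(\varepsilon_X(x))$, which breaks the right inclusion. Either way $(\varepsilon_X(x),\varepsilon_X(y))\notin R_{D(X)}$.

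The main potential obstacle is really just bookkeeping: making sure that the two set-theoretic inclusions defining $R_{D(X)}$ are correctly unfolded in terms of the clopens $\varepsilon_X(x)=\{U\in D(X) : x\in U\}$ and that the dichotomy of Lemma \ref{lt} lines up exactly with these two inclusions. No further work on the Heyting-morphism side is needed because Esakia duality supplies it, and no new property of the relation $R$ beyond (S1)--(S3) and Lemma \ref{lt} is invoked. Hence the proof reduces to citing Esakia, citing (S4) for one direction, and a short contrapositive argument via Lemma \ref{lt} for the other.
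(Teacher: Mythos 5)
Your proof is correct and follows exactly the route of the paper's own (much terser) argument: Esakia duality for the bijective Heyting morphism part, condition (S4) for the forward direction of (m), and Lemma \ref{lt} (via contrapositive) for the reverse direction. Your unfolding of how the two alternatives in Lemma \ref{lt} each violate one of the two inclusions defining $R_{D(X)}$ is precisely the detail the paper leaves implicit.
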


\begin{proof}As a consequence of the Esakia duality for Heyting algebras, we have that $\varepsilon_{X}$ is
a bijective Heyting morphism. Furthermore, from (S4) and Lemma \ref{lt} it follows that $(x, y) \in R$
if and only if $(\varepsilon_X (x), \varepsilon_X (y)) \in R_{D(X)}$. 
\end{proof}

From the previous results and using the usual procedures we obtain the following
theorem.

\begin{theorem}
The categories $\mathbf{tHA}$ and $\mathbf{tHS}$ are dually equivalent.
\end{theorem}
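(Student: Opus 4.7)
The plan is to assemble the contravariant functors $\Phi\colon \mathbf{tHA}\to\mathbf{tHS}$ and $D\colon \mathbf{tHS}\to\mathbf{tHA}$ already constructed piecewise in the paper, and then exhibit $\sigma$ and $\varepsilon$ as natural isomorphisms between the identity functors and the two compositions. Concretely, on objects $\Phi(\mathcal{A},N)=(\mathfrak{X}(A),\subseteq,R_A)$ by Lemma \ref{l13}, and on morphisms $\Phi(k)(S)=k^{-1}(S)$ by Lemma \ref{l14}. Dually, on objects $D(X,R)=(D(X),g_R,h_R,f_R,p_R)$ by Lemma \ref{lemma8}, and on morphisms $D(k)(U)=k^{-1}(U)$ by Lemma \ref{lemma10}. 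Preservation of identities and composition in both cases is immediate from the fact that preimage commutes with composition and sends identity to identity; this verification is purely set-theoretic and inherited from Esakia duality, so no new work beyond routine checking is required.

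Next I would establish that $\sigma_A\colon A\to D(\mathfrak{X}(A))$ is a natural isomorphism $\mathrm{Id}_{\mathbf{tHA}}\Rightarrow D\circ\Phi$. That each component $\sigma_A$ is a tense H-isomorphism is precisely the content of Lemma \ref{l13}, so only naturality remains: for any tense H-morphism $k\colon A_1\to A_2$ one must check that $\sigma_{A_2}\circ k = D(\Phi(k))\circ \sigma_{A_1}$, i.e.\ that $\sigma_{A_2}(k(a))=\Phi(k)^{-1}(\sigma_{A_1}(a))$ for every $a\in A_1$. This identity was already invoked in the proof of Lemma \ref{l14} and amounts to the equivalence $k(a)\in S \Longleftrightarrow a\in k^{-1}(S)$, which is tautological. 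Symmetrically, the components $\varepsilon_X\colon X\to \mathfrak{X}(D(X))$ are tense H-isomorphisms by Proposition \ref{propo3}, and naturality $\varepsilon_{X_2}\circ k = \Phi(D(k))\circ \varepsilon_{X_1}$ reduces to showing that $U\in \varepsilon_{X_2}(k(x))$ iff $k^{-1}(U)\in \varepsilon_{X_1}(x)$, which is again immediate from the definitions.

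Once these two natural isomorphisms are in place, the dual equivalence between $\mathbf{tHA}$ and $\mathbf{tHS}$ follows from the standard categorical criterion: a pair of contravariant functors $\Phi,D$ with natural isomorphisms $\mathrm{Id}\cong D\circ\Phi$ and $\mathrm{Id}\cong \Phi\circ D$ constitute a dual equivalence. The main obstacle, and where I would spend the most care, is not the algebraic content (everything essential has been proved) but ensuring that the naturality squares respect the tense structure; in particular, in checking naturality of $\varepsilon$ one should make sure that the relation $R_{D(X)}$ on $\mathfrak{X}(D(X))$ pulls back correctly along $\Phi(D(k))$, which requires the characterization of $R_{D(X)}$ via $f_R$ and $g_R$ used in Proposition \ref{propo3} together with property (m) of isomorphisms in $\mathbf{tHS}$. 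Everything else is bookkeeping inherited from Esakia's duality.
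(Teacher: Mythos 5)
Your proposal is correct and follows essentially the same route as the paper, which derives the theorem from Lemmas 4.6--4.8 and Proposition 4.17 ``using the usual procedures''; you have simply made explicit the functoriality checks and the naturality of $\sigma$ and $\varepsilon$ that the paper leaves implicit. No discrepancy in approach or substance.
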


\section*{Acknowledgements}
The authors want to thank the institutional support of Consejo Nacional de Investigaciones Cient\'ificas y T\'ecnicas (CONICET).

%%%%%%  THE BIBLIOGRAPHY
%%%%%%  See the examples and format yours according to them

%%%%%%  AUTHOR'S ADDRESS INFORMATION AT THE END OF THE PAPER:

\AuthorAdressEmail{Federico Almi\~{n}ana}{CONICET and Instituto de Ciencias B\'asicas\\
Universidad Nacional de San Juan\\
5400, San Juan, Argentina}{federicogabriel17@gmail.com}

\AuthorAdressEmail{Gustavo Pelaitay}{CONICET and Instituto de Ciencias B\'asicas\\
Universidad Nacional de San Juan\\
5400, San Juan, Argentina}{gpelaitay@gmail.com}

%%%  In case of more than one author use also 
%%%  the following as many times as needed:

\AdditionalAuthorAddressEmail{William Zuluaga}{ CONICET and Departamento de Matem\'atica\\ Facultad de Ciencias Exactas\\
Universidad Nacional del Centro  de la Provincia de Buenos Aires\\
Pinto 399, Tandil, Buenos Aires, Argentina}{wizubo@gmail.com}
\end{document}